\title[Kunneth formula for pullbacks]
{A Kunneth formula for Bredon cohomology of pullbacks and Twisted K-theory of some 6-dimensional orbifolds.}
\author{German Combariza}
\address{Departamento de Matem\'aticas. \\Pontificia Universidad Javeriana\\Cra. 7 No. 43-82 - Edificio Carlos Ort\'iz 5to piso\\ Bogot\'a D.C, Colombia}
\email{germancombariza@javeriana.edu.co}
\author{Mario Vel\'asquez}
\address{Departamento de Matem\'aticas. \\Pontificia Universidad Javeriana\\Cra. 7 No. 43-82 - Edificio Carlos Ort\'iz 5to piso\\ Bogot\'a D.C, Colombia}
\email{mario.velasquez@javeriana.edu.co}
 \urladdr{https://sites.google.com/site/mavelasquezm/}
         \date{\today}
\DeclareMathAlphabet\EuR{U}{eur}{m}{n}
\SetMathAlphabet\EuR{bold}{U}{eur}{b}{n}
\theoremstyle{plain}
\newtheorem{theorem}{Theorem}[section]
\newtheorem{lemma}[theorem]{Lemma}
\newtheorem{proposition}[theorem]{Proposition}
\newtheorem{corollary}[theorem]{Corollary}
\theoremstyle{definition}
\newtheorem{definition}[theorem]{Definition}
\newtheorem{example}[theorem]{Example}
\newtheorem{remark}[theorem]{Remark}
\global\let\c@equation=\c@theorem}
\newcommand{\comsquare}[8]                   
{\begin{CD}
		#1 @>#2>> #3\\
		@V{#4}VV @V{#5}VV\\
		#6 @>#7>> #8
	\end{CD}
}
\newcommand{\xycomsquare}[8]                   
{\xymatrix
	{#1 \ar[r]^{#2} \ar[d]^{#4} &
		#3 \ar[d]^{#5}  \\
		#6\ar[r]^{#7} &
		#8
	}
}
\newcommand{\calf}{\mathcal{F}}
\newcommand{\calfin}{\mathcal{FIN}}
\newcommand{\calM}{\mathcal{M}}
\newcommand{\calN}{\mathcal{N}}
\newcommand{\caln}{{\mathcal N}}
\newcommand{ \calr}{\mathcal{R}}
\newcommand{\calm}{{\mathcal M}}
\newcommand{\IC}{{\mathbb C}}
\newcommand{\IR}{{\mathbb R}}
\newcommand{\IZ}{{\mathbb Z}}
\newcommand{\curs}{\EuR}
\newcommand{\MODULES}{\curs{MODULES}}
\newcommand{\Or}{\curs{Or}}
\newcommand{\ch}{\underline{C}}
\newcommand{\colim}{\operatorname{colim}}
\newcommand{\charr}{\operatorname{Ch}}
\newcommand{\Ext}{\operatorname{Ext}}
\newcommand{\GL}{\operatorname{GL}}
\newcommand{\Hom}{\operatorname{Hom}}
\newcommand{\PGL}{\operatorname{PGL}}
\newcommand{\Tor}{\operatorname{Tor}}
\newcommand{\OrGF}[2]{\Or_{#2}(#1)}                
\newcommand{\higherlim}[3]{{\setbox1=\hbox{\rm lim}
		\setbox2=\hbox to \wd1{\leftarrowfill} \ht2=0pt \dp2=-1pt
		\mathop{\vtop{\baselineskip=5pt\box1\box2}}
		_{#1}}^{#2}#3}
\newcommand{\version}[1]                       
{\begin{center} last edited on #1\\
		last compiled on \today \\
		name of texfile: \jobname
	\end{center}
}
\newcounter{commentcounter}
\begin{document}
\maketitle	
	\begin{abstract}In this paper we prove a Kunneth formula for Bredon cohomology for actions of a  pullback of groups. We show how this formula can be used to compute orbifold twisted K-theory for some discrete twistings. Using that result, we compute orbifold K-theory for some 6-dimensional orbifolds introduced by Vafa and Witten. These examples also show the limitations of the method.
		\end{abstract}
	
\section*{Introduction}
One of the useful tools in singular cohomology is the Kunneth formula. Let $X$ and $Y$ be topological spaces, it allows to compute $H^*(X\times Y)$ given knowledge of $H^*(X)$ and $H^*(Y)$. An interesting problem is to extend that result to extraordinary cohomology theories. A remarkable case is equivariant K-theory, introduced by Segal in \cite{segal1} and defined for proper actions of discrete groups in \cite{luck-cherncharacters}. Let $\Gamma$ be a discrete group acting properly on spaces $X$ and $Y$, one wants to compute$$K_G^*(X\times Y)$$in terms of $K_G^*(X)$ and $K_G^*(Y)$. In this case, to obtain an analogue to the Kunneth formula is an open problem for actions of discrete groups (even for finite groups). For actions of $\IZ/2\IZ$ a Kunneth theorem was proved in \cite{rosenberg}.

An approximation of that problem is to consider Bredon cohomology. When we take coefficients in the Bredon module of representations (see Def. \ref{repfunctor}) there is an spectral sequence converging to equivariant K-theory whose $E^2$-term correspond Bredon cohomology.

Let   $\Gamma$ be  a discrete group which   is obtained as a pullback diagram
 \begin{equation}\label{diagrampullback}
 \xymatrix{
 	\Gamma \ar[r]^{p_2}\ar[d]_{p_1} & H\ar[d]^{\pi_2}\\
 	G \ar[r]_{\pi_1} & K}
\end{equation}
When the maps $\pi_1$ and $\pi_2$ are clear from the context we denote this pullback by $G\times_KH$.

If $X$ is a proper $G$-CW-complex and $Y$ is a $H$-CW-complex, the space $X\times Y$ is naturally a $\Gamma$-CW-complex. In Theorem \ref{maintheorem} we provide a Kunneth formula computing $\Gamma$-equivariant Bredon cohomology of $X\times Y$ in terms of Bredon cohomology of $X$ and $Y$.

One of the advantages to work with pullbacks is that this includes some interesting cases. For example
\begin{itemize}
	\item If $K$ is the trivial group we obtain a Kunneth formula for the action of the direct product $G\times H$ over $X\times Y$.
	\item If $K=G=H$, we obtain a Kunneth formula for the diagonal action of $G$ over a product $X\times Y$.
\end{itemize}

On the other hand if we take Bredon cohomology with constant coefficients we obtain a Kunneth formula for group cohomology of pullbacks.

As our main application in Corollary \ref{y1} and Theorem \ref{twistedy2} we compute the orbifold K-theory and twisted orbifold K-theory (for an specific twisting) of some celebrated examples presented in \cite{vafa-witten}.

These orbifolds are 6-dimensional, this makes very difficult to compute their K-theories directly. Using our formula we obtain a simple calculation of it. These examples also show the limitations of our method (Remark \ref{limited}).

This paper continues the work started in \cite{BAJUVE} where some calculations concerning pullbacks of groups were made. We clarify some questions that appear in that work about the computation of twisted K-theory for pullbacks.

\section{Tensor product of Bredon modules}
In this section we introduce the notion of tensor product of Bredon modules over a Green functor.

Let $R$ be a commutative ring with unity. Let $\OrGF{G}{}$ be the orbit category of $G$; it is a category with objects $G/H$ for each subgroup $H\subseteq G$ and with morphisms given by $G$-equivariant maps where every $G$-equivariant map $G/H\mapsto G/K$ is multiplication by an element $g\in G$ such that $gHg^{-1}\subseteq K$. In a similar way one can also define the orbit category relative to a family of groups $\calf$, which we denote by $\OrGF{G}{\calf}$. For more information about $\OrGF{G}{\calf}$ the reader can consult \cite{luck1998}.


	Recall that a Bredon $R$-module over $\calf$ (or simply a Bredon module over $\calf$ if $R$ is clear from the context) is a contravariant functor $$\OrGF{G}{\calf}\to R-\MODULES.$$ A Mackey functor over $\OrGF{G}{\calf}$ is a bifunctor
$$\calM=(\calM_*,\calM^*):\OrGF{G}{\calf}\rightarrow R-\MODULES,$$satisfying certain double coset formula. For details consult \cite{webb}.
\begin{definition}
	A Green functor over $\OrGF{G}{\calf}$ is a Mackey functor $\caln$ together with a natural pairing $\caln\times\caln\to\caln$ such that for every $G/P\in\OrGF{G}{\calf}$ the map $\caln(G/P)\times \caln(G/P)\to \caln(G/P)$ makes $\caln(G/P)$ a commutative ring.
\end{definition}

\begin{example}
    Let  $R$ be a commutative ring with unity. The trivial functor $$\underbar{R}:\OrGF{G}{\calf}\to R-\MODULES$$ such that $G/H\mapsto R$ for every subgroup $H\in \calf$, is a Green functor. 
\end{example}


\begin{example}\label{repfunctor}
    Let $G$ be a discrete group. Let $\calfin_G$ be the family of finite subgroups of $G$ (or simply $\calfin$ if $G$ is clear from the context). The representation functor is a Bredon module
    $$ \mathcal{R}^G: \OrGF{G}{\calfin} \to \mathbb{Z}-\MODULES$$
    which associates to every object $G/H$ with $H\in\calfin$ its representation ring $R(H)$. The representation functor is a Green functor.    
\end{example}


Now we will define a notion of tensor product of Bredon modules over a Green functor. To define it we need to recall the notion of module over a Green functor.
\begin{definition}
	Let $\caln$ be a Green functor over $\OrGF{G}{\calf}$. An $\caln$-module consists of a Bredon module $\calm$ over $\OrGF{G}{\calf}$ and a natural transformation $\caln\times\calm\to\calm$	such that for every subgroup $P\in\calf$, each pairing morphism $$\caln(G/P)\times\calm(G/P)\to\calm(G/P)$$ endows to $\calm(G/P)$ with a unitary $\caln(G/P)$-module structure.
\end{definition}
\begin{definition}
	Let $\calf$ a family of subgroups of $G$. Let $f:G\to K$ be a group homomorphism, and let $\caln$ be a Green functor over $\OrGF{K}{f(\calf)}$ we define the Green functor $f^*\caln$ over $\OrGF{G}{\calf}$ as follows
	\begin{align*}
	f^*\caln:\OrGF{G}{\calf}&\to\IZ-\MODULES\\
	G/P& \to\caln(K/f(P)).
	\end{align*}
\end{definition}

\begin{example}\label{PullbackExampleBredon}
	Let $\Gamma$ be a group as in the diagram \eqref{diagrampullback}. The Bredon module $\calr^\Gamma$, is a $p_1^*\calr^G$-module, a  $p_2^*\calr^H$-module as well a $(\pi_1\circ p_1)^*\calr^K$-module. 
\end{example}
	\begin{definition}
		Let $\Gamma$ be a group coming from a diagram \eqref{diagrampullback}. Let $\calf$ be a family of subgroups of $K$, $\calf_1$ be a family of subgroups of $G$ and $\calf_2$ be a family of subgroups of $H$. We say that $(\calf,\calf_1,\calf_2)$ are a sequence of compatible families of subgroups for $\Gamma$ if $\pi_1(\calf_1)\subseteq\calf$ and $\pi_2(\calf_2)\subseteq\calf$. 
	\end{definition}
Let $(\calf,\calf_1,\calf_2)$ be a sequence of compatible families of subgroups of $\Gamma$, we denote by $\calf_1\times_\calf\calf_2$ to the family of subgroups of $\Gamma$ defined as
$$\calf_1\times_\calf\calf_2=\{P\times_{\pi_1(P)}Q\mid P\in\calf_1,\text{ and }Q\in\calf_2\}.$$

\begin{definition}Consider the diagram \eqref{diagrampullback}. Let $(\calf,\calf_1,\calf_2)$ be a sequence of compatible families of subgroups for $\Gamma$. Let $\mathcal{N}$ be a Bredon module over $\OrGF{K}{\calf}$, let $\mathcal{M}_1$ be a $(\pi_1^*\mathcal{N})$-module over $\OrGF{G}{\calf_1}$ and $\mathcal{M}_2$ be a $(\pi_2^*\mathcal{N})$-module over $\OrGF{H}{\calf_2}$. We define the tensor product $\mathcal{M}_1\otimes_\mathcal{N}\mathcal{M}_2$. It is the $(\pi_1\circ p_1)^*\mathcal{N}$-module over $\OrGF{\Gamma}{\calf_1\times_\calf\calf_2}$, satisfying
	\begin{align*}
	(\calM_1\otimes_\calN\calM_2):\OrGF{\Gamma}{\calf_1\times_\calf\calf_2}&\to \IZ-\MODULES\\
	\Gamma/(P\times_{\pi_1(P)}Q)&\mapsto \mathcal{M}_1(G/P)\otimes_{\mathcal{N}(K/\pi_1(P))}\mathcal{M}_2(H/Q)
	\end{align*}
\end{definition}
\section{Bredon cohomology associated to twisted K-theory}

In this section we briefly recall the definition of Bredon cohomology with coefficients in a Bredon module, then define twisted K-theory for discrete torsion and using the coefficients of this theory we define a Bredon module associated to twisted K-theory.
\subsection{Bredon cohomology}
 \begin{definition}Let $\calf$ be a family of subgroups of $G$. A $G$-CW complex is a CW complex with a $G$-action permuting the cells and such that if a cell is sent to itself, this is done by the identity map. A $\calf$-$G$-CW-complex is a $G$-CW-complex where all cell stabilizers are elements of $\calf$. A $\calfin$-$G$-CW-complex is called a proper $G$-CW-complex.
 \end{definition}

Let $X$ be a $\calf$-$G$-CW-complex. The cellular chain complex of $X$ is defined as the $\IZ$-graded Bredon module \begin{align*}\ch_n^G(X):\OrGF{G}{\calf}&\to\IZ-\MODULES\\\ch_n^G(X)(G/H)&=H_n((X^{(n)})^H,(X^{(n-1)})^H;\IZ)\end{align*}together with a boundary map $$\underline{\partial}:\ch_n^G(X)\to\ch_{n-1}^G(X)$$induced from the cellular boundary map, with $\underline{\partial}^2=0$. 

Consider two Bredon modules $\calM$ and $\calM'$, we denote by $$\Hom_{\OrGF{G}{\calf}}(\calM,\calM')$$ the abelian group of natural transformations from $\calM$ to $\calM'$. Let $X$ be a $\calf$-$G$-CW-complex and $\calM$ a Bredon module over $\OrGF{G}{\calf}$, we define the \emph{Bredon cochain complex with coefficients in $\calM$} as$$C^n_\calf(X;\calM)=\Hom_{\OrGF{G}{\calf}}(\ch_n(X),\calM)\text{, with }\delta=\Hom_{\OrGF{G}{\calf}}(\underline{\partial},id).$$
The homology of this complex is called the \emph{Bredon cohomology} of $X$ with coefficients in $\calM$ with respect to the family $\calf$, denoted by $H^*_{\calf}(X;\calM)$ or also by $H^*_G(X;\calM)$ when the family $\calf$ is clear from the context.
\subsection{Twisted K-theory}
Equivariant twisted K-theory was defined by Atiyah and Segal in \cite{atiyahsegaltwisted} for actions of compact Lie groups. The case of proper actions of discrete groups was treated in \cite{BEJU} for general twistings. On the other hand, the approach in \cite{ademruan} for orbifold twisted K-theory for discrete torsion has the advantage of a more concrete description using finite dimensional vector bundles. As is showed in \cite{dwyer2008}, the last approach can be adapted to the case of proper actions of discrete groups and discrete torsion. In this section we briefly recall this construction.

Let $V$ be a finite dimensional complex vector space, let $\GL(V)$ be the group of invertible linear transformations of $V$. Consider the central extension
\begin{equation}\label{pgl}	
0\to\IC^*\to\GL(V)\xrightarrow{\pi}\GL(V)/\IC^*\to0.
\end{equation}
We denote the quotient group in the above extension by $\PGL(V)$.
\begin{definition}
	A projective representation of $G$ is a pair $(\rho,V)$ where $V$ is a finite dimensional complex vector space and $$\rho:G\to\PGL(V)$$is a homomorphism.
\end{definition} 
Now if we choose a map (not a homomorphism) $$f:\PGL(V)\to\GL(V)$$ such that $\pi\circ f=id$ and take the pullback of the extension \ref{pgl}, we have a commutative diagram 
$$\xymatrix{0\ar[r]&\IC^*\ar[r]&\GL(V)\ar[r]_\pi&\PGL(V)\ar[r]\ar@{.>}@/_{5mm}/[l]_f&0  \\0\ar[r]&\IC^*\ar[u]_{id}\ar[r]&\rho^*G\ar[u]_{\bar{\rho}}\ar[r]_{\pi_1}&G\ar[u]_\rho\ar[r]&0}$$

Consider the map $\sigma=f\circ\rho$ and define
\begin{align*}
\alpha:G\times G&\to\IC\subseteq\GL(V)\\
(g,h)&\mapsto\alpha(g,h)=\sigma(g)\sigma(h)\sigma(gh)^{-1}.
\end{align*}
An straighforward calculation shows that $\alpha\in Z^2(G;\IC^*)$.
\begin{definition}
	Let $V$ be a complex vector space. Let us fix $\alpha\in Z^2(G;\IC^*)$. An $\alpha$-projective representation of $G$ is a map $\sigma:G\to \GL(V)$  satisfying
	\begin{enumerate}
		\item $\sigma(g)\sigma(h)=\alpha(g,h)\sigma(gh)$ for all $g,h\in G$, and 
		\item $\sigma(1)=id$.
	\end{enumerate}
\end{definition}
Notice that $\alpha$ has to be a cocycle in $Z^2(G,\IC^*)$, satisfying $\alpha(g,1)=\alpha(1,g)=1$.

The direct sum of two $\alpha$-twisted representations is again an $\alpha$-twisted representation. We denote by $R_\alpha(G)$ to the Grothendieck group associated to the monoid of isomorphism classes of $\alpha$-twisted representations of $G$.

The tensor product of an $\alpha$-twisted representation with a $\beta$-twisted representation is an $(\alpha+\beta)$-twisted representation. It can be extended to Grothendieck groups obtaining a product
$$R_\alpha(G)\otimes R_\beta(G)\to R_{\alpha+\beta}(G).$$
An interesting property of the groups $R_\alpha(G)$ is the following.
\begin{theorem}\label{cohomologos}
	If $\alpha$ is cohomologous to $\beta$, then there is a bijective correspondence between $\alpha$-twisted representations of $G$ and $\beta$-twisted representations of $G$, and an isomorphism of abelian groups
	$$R_\alpha(G)\cong R_\beta(G).$$
\end{theorem}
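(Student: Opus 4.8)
The plan is to make the relation ``$\alpha$ cohomologous to $\beta$'' completely explicit at the level of $2$-cocycles, and then produce the correspondence by rescaling each $\alpha$-twisted representation by a scalar-valued function of $g$. First I would record the hypothesis concretely: since $\alpha,\beta\in Z^2(G;\IC^*)$ are cohomologous there is a function $\mu\colon G\to\IC^*$ (a $1$-cochain) with
\[
\alpha(g,h)=\beta(g,h)\,\mu(g)\mu(h)\mu(gh)^{-1}\qquad\text{for all }g,h\in G.
\]
Evaluating this at $g=h=1$ and using $\alpha(1,1)=\beta(1,1)=1$ forces $\mu(1)=1$; I single out this normalization because it is precisely what guarantees that the construction below preserves the condition $\sigma(1)=\id$.

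Next I would define the correspondence. Given an $\alpha$-twisted representation $\sigma\colon G\to\GL(V)$, set $\sigma'(g)=\mu(g)^{-1}\sigma(g)$. A one-line computation using the displayed identity gives
\[
\sigma'(g)\sigma'(h)=\mu(g)^{-1}\mu(h)^{-1}\alpha(g,h)\sigma(gh)=\beta(g,h)\,\mu(gh)^{-1}\sigma(gh)=\beta(g,h)\sigma'(gh),
\]
and $\sigma'(1)=\mu(1)^{-1}\sigma(1)=\id$, so $(\sigma',V)$ is a $\beta$-twisted representation. On a fixed $V$ the assignment $\sigma\mapsto\sigma'$ is a bijection from $\alpha$-twisted to $\beta$-twisted representations, with inverse $\tau\mapsto(g\mapsto\mu(g)\tau(g))$. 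Moreover any intertwiner $A\colon V\to W$ between two $\alpha$-twisted representations is simultaneously an intertwiner between the two corresponding $\beta$-twisted representations, since the scalars $\mu(g)^{-1}$ commute with $A$; hence the bijection descends to a bijection on isomorphism classes, which is the asserted bijective correspondence.

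Finally I would upgrade this to the statement about Grothendieck groups. Because $(\sigma_1\oplus\sigma_2)'(g)=\mu(g)^{-1}\bigl(\sigma_1(g)\oplus\sigma_2(g)\bigr)=\sigma_1'(g)\oplus\sigma_2'(g)$, the bijection on isomorphism classes is additive for direct sums, so it is an isomorphism of the abelian monoids of isomorphism classes of $\alpha$- and of $\beta$-twisted representations, and therefore induces an isomorphism of the associated Grothendieck groups $R_\alpha(G)\cong R_\beta(G)$. There is no genuine obstacle here; the only point needing care is the bookkeeping with the coboundary convention and the normalization $\mu(1)=1$, which must be built in so that $\sigma'$ still satisfies $\sigma'(1)=\id$ and so that the two rescaling maps are mutually inverse.
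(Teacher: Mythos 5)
Your proof is correct, and it is the standard argument for this classical fact. The paper itself does not actually prove this statement; it simply states it and refers the reader to Dwyer (2008) and Karpilovsky for details, and the rescaling $\sigma'(g)=\mu(g)^{-1}\sigma(g)$ by a $1$-cochain $\mu$ realizing the coboundary relation is exactly what those sources use. The one detail worth keeping, which you handled correctly, is that the normalization $\alpha(1,1)=\beta(1,1)=1$ already forces $\mu(1)=1$, so no further adjustment of $\mu$ is needed to preserve $\sigma'(1)=\mathrm{id}$ and to make the two rescalings mutually inverse; everything else (compatibility with intertwiners, additivity under $\oplus$, passage to Grothendieck groups) is routine and is laid out cleanly in your write-up.
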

More details on projective representations can be found in Section 2 of \cite{dwyer2008} or in \cite{karpilovsky}.

 Now we recall the definition of equivariant twisted K-theory for this kind of twistings. Let $\alpha\in Z^2(G,S^1)$ be a torsion cocycle of order $n$. One can assume that the cocycle takes values in $\IZ/n\IZ\subseteq S^1$. We call such a cocycle \emph{normalized}. Then  we can assign to $\alpha$ a central extension of $G$ by $\IZ/n\IZ$ as follows.
\begin{equation}\label{extension}1\to\IZ/n\IZ\to G_\alpha\xrightarrow{\rho}G,\to1\end{equation}
where $G_\alpha$ as a set is $G\times \IZ/n\IZ$ and if we denote by $\sigma$ a fixed generator of $\IZ/n\IZ$, the product is given by
$$(g,\sigma^j)\cdot (h,\sigma^k)=(gh,\alpha(g,h)\sigma^{j+k}).$$

We have the following theorem. For a proof consult \cite{dwyer2008}.
\begin{theorem}Let $G$ be a finite group and let $\alpha$ be a 2-cocycle taking values in $\IZ/n\IZ$. There is a bijective correspondence between $\alpha$-twisted representations of $G$ and representations of $G_\alpha$ where $\sigma$ acts by multiplication by $e^{2\pi i/n}$
\end{theorem}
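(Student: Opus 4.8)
The plan is to exhibit explicit, mutually inverse assignments between $\alpha$-twisted representations of $G$ and ordinary representations of $G_\alpha$ on which the central generator $\sigma$ acts by the scalar $e^{2\pi i/n}$, and then to observe that both assignments preserve direct sums and carry intertwiners to intertwiners, so that they descend to a bijection of isomorphism classes compatible with the monoid structures used to define $R_\alpha(G)$. Throughout I write $s\colon G\to\GL(V)$ for a twisted representation (to avoid clashing with the generator $\sigma$) and $t\colon G_\alpha\to\GL(V)$ for an honest representation.

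First, given an $\alpha$-twisted representation $s$, I would define $\tilde s\colon G_\alpha\to\GL(V)$ on the underlying set $G\times\IZ/n\IZ$ by $\tilde s(g,\sigma^j)=e^{2\pi i j/n}\,s(g)$; note each $s(g)$ is invertible since $s(g)s(g^{-1})=\alpha(g,g^{-1})\,\id$ is a nonzero scalar, so $\tilde s$ indeed lands in $\GL(V)$. To check it is a homomorphism, evaluate on the product $(g,\sigma^j)\cdot(h,\sigma^k)=(gh,\alpha(g,h)\sigma^{j+k})$ of \eqref{extension}: writing $\alpha(g,h)=\sigma^{m}$ under the fixed identification $\IZ/n\IZ\subseteq S^1$, the left side is sent to $e^{2\pi i(m+j+k)/n}\,s(gh)$, while the product of the images is $e^{2\pi i j/n}s(g)\cdot e^{2\pi i k/n}s(h)=e^{2\pi i(j+k)/n}\alpha(g,h)s(gh)=e^{2\pi i(m+j+k)/n}s(gh)$, using $s(g)s(h)=\alpha(g,h)s(gh)$ and reading the scalar $\alpha(g,h)$ as $e^{2\pi i m/n}$. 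Normalization of $\alpha$ makes $(1,1)$ the identity of $G_\alpha$, so $\tilde s(1,1)=s(1)=\id$, and by construction $\tilde s(1,\sigma)=e^{2\pi i/n}\,\id$.

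Conversely, given $t\colon G_\alpha\to\GL(V)$ with $t(1,\sigma)=e^{2\pi i/n}\,\id$, I would set $s(g)=t(g,1)$ via the set-theoretic section $g\mapsto(g,1)$. Since $(g,1)(h,1)=(gh,\alpha(g,h))=(gh,1)\cdot(1,\alpha(g,h))$ in $G_\alpha$ and $t(1,\sigma^m)=e^{2\pi i m/n}\,\id$, one gets $s(g)s(h)=\alpha(g,h)\,s(gh)$, and normalization gives $s(1)=\id$. That these two operations are mutually inverse is a direct check: every element of $G_\alpha$ is uniquely $(g,1)\cdot(1,\sigma^j)$, so $t$ is recovered from $s(g)=t(g,1)$ together with the prescribed value on $(1,\sigma)$, and starting from $s$ one recovers $s(g)=\tilde s(g,1)$. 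Finally both constructions are additive under $\oplus$ and send an intertwiner $V\to V'$ to itself, so they induce the claimed bijection of isomorphism classes.

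The only genuine subtlety — and the step I would be most careful about — is the bookkeeping of the two roles of $\IZ/n\IZ$: as the subgroup of $S^1$ generated by $e^{2\pi i/n}$ in which $\alpha$ takes its values, and as the central kernel of $G_\alpha$ acting on $V$ by scalars. The hypothesis that $\sigma$ act \emph{precisely} by $e^{2\pi i/n}$ is exactly what rigidifies this identification and makes the correspondence a bijection rather than merely a surjection from a larger collection of representations of $G_\alpha$; a representation on which $\sigma$ acts by $e^{2\pi i k/n}$ with $\gcd(k,n)=1$ corresponds instead to an $\alpha^{k}$-twisted representation, so fixing $k=1$ is just a normalization (consistent with Theorem \ref{cohomologos} when $\alpha^k$ is cohomologous to $\alpha$).
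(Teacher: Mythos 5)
Your proof is correct, and it is the standard argument: the paper itself does not prove this statement but defers to \cite{dwyer2008}, where essentially the same two explicit, mutually inverse constructions ($s\mapsto\tilde s$ with $\tilde s(g,\sigma^j)=e^{2\pi i j/n}s(g)$, and $t\mapsto s$ with $s(g)=t(g,1)$) appear. Your verification of the homomorphism property, of the identity $\tilde s(1,1)=\id$, and of the mutual-inverse claim all check out against the multiplication rule $(g,\sigma^j)(h,\sigma^k)=(gh,\alpha(g,h)\sigma^{j+k})$ and the normalization $\alpha(g,1)=\alpha(1,g)=1$, and your closing remark correctly identifies what happens when $\sigma$ is allowed to act by other primitive $n$-th roots of unity.
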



\begin{definition}
	Let $G$ be a discrete group  and $\alpha\in Z^2(G,S^1)$ be a normalized cocycle of finite order $n$, consider the central extension \eqref{extension} associated to $\alpha$. Let $X$ be a finite, proper $G$-CW-complex. An $\alpha$-twisted $G$-vector bundle over $X$ is a complex $G_\alpha$-vector bundle $p:E\to X$ such that 
	\begin{enumerate}
		\item The action of $G_\alpha$ on $E$ covers the action of $G$ on $X$. That means, for every $g\in G_\alpha$ and $z\in E$
		$$p(g\cdot z)=\rho(g)\cdot p(z).$$
		\item The action of $\IZ/n\IZ$  on $E$ is given by multiplication by $e^{2\pi i/n}$ on the fibers.
	\end{enumerate}
\end{definition}
If $E$ and $F$ are both $\alpha$-twisted $G$-vector bundles, then so is $E\oplus F$. Then one can consider the monoid of isomorphism classes of $\alpha$-twisted vector bundles with the operation of direct sum. Let ${ }^\alpha K_G(X)$ the associated Grothendieck group. Similarly to the case of twisted representations, ${ }^\alpha K_G(X)$ is not a ring, but we have an external product $${ }^\alpha K_G(X)\otimes{ }^\beta K_G(X)\to { }^{\alpha+\beta} K_G(X),$$
analogous to the product in twisted representations. For details on this product see Theorem 3.4 in  \cite{dwyer2008}.

One crucial property of the above construction of twisted K-theory is the following. Consider an extension as \eqref{extension}, we have an inclusion $${ }^\alpha K_G(X)\to K_{G_\alpha}(X),$$for every finite proper $G$-CW-complex. It allows to extend many results obtained for equivariant K-theory for proper actions to the  twisted case. In particular results from \cite{luck-cherncharacters}.
\begin{definition}[Def. 4.4 in \cite{ademruan}]
	Suppose that $X/G$ (With $G$ a finite group) is a quotient orbifold, and let $\alpha\in Z^2(G;S^1)$ be a torsion 2-cocycle, then one can define its \emph{$\alpha$-twisted orbifold K-theory} just as its $\alpha$-twisted $G$-equivariant K-theory, in other words
	$${ }^\alpha K_{orb}^*(X/G)={ }^\alpha K_G^*(X).$$
\end{definition}
\begin{remark}
	There is a more general definition of twisted K-theory taking twistings as projective unitary stable equivariant bundles, they are classified by third degree integer cohomology classes of $X\times_GEG$ (see \cite{atiyahsegaltwisted} and \cite{BEJU}). However given a torsion cocycle $\alpha\in Z^2(G;S^1)$ it is possible to associate a projective unitary stable equivariant bundle to $\alpha$ in such way that both definition of twisted K-theory are equivalent in this case. For details see Sec. 5.4 in \cite{BEUV}.
\end{remark}

\subsection{The Bredon module of projective representations}
Fix a normalized torsion cocycle $\alpha\in Z^2(G,S^1)$. We define a Bredon module $\calr_\alpha$ over $\OrGF{G}{\calfin}$. On objects its defined by $\calr_\alpha(G/H)=R_\alpha(H)$ and on morphism as follows, if $\phi:G/H\to G/K$ is a morphism in $\OrGF{G}{\calfin}$, $\phi$ is determined by an element $g\in G$ with $gHg^{-1}\subseteq K$ and $\phi(g'H)=g'gK$. We define $\calr_\alpha(\phi):R_\alpha(K)\to R_\alpha(H)$ as the composition
$$R_\alpha(K)\xrightarrow{i^*} R_\alpha(gHg^{-1})\xrightarrow{c_g^*}R_\alpha(H),$$
where the first map is the pullback of the inclusion $i:gHg^{-1}\to K$ and the last one is the pullback of the isomorphism induced by conjugation by $g$. 


\begin{definition}Let $X$ be a proper $G$-CW-complex. The graded group $H^*_G(X;\calr_\alpha)$ is called the $\alpha$-twisted Bredon cohomology of $X$.
\end{definition}
\subsection{Spectral sequence}
There is an spectral sequence constructed by Davis and Luck converging to equivariant K-theory for proper $G$-CW-complexes whose $E_2$-term is given in terms of Bredon cohomology. One can adapt this spectral sequence to be used in the case of twisted K-theory for discrete torsion. For a more general approach the reader can consult \cite{BEUV}.
\begin{theorem}\label{a-h-twisted}
	Let $X$ be a finite $G$-CW-complex and let $\alpha\in Z^2(G,S^1)$ be a normalized torsion cocycle. Then there is a spectral sequence with
	$$E_2^{p,q}=\begin{cases}
	H^p_G(X;\calr_\alpha)&\text{if $q$ is even}\\
	0& \text{if $q$ is odd}
	\end{cases}$$so that $E_\infty^{p,q}\Rightarrow { }^\alpha K_G^{p+q}(X).$
\end{theorem}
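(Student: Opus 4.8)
The plan is to obtain this spectral sequence directly from the Atiyah--Hirzebruch-type spectral sequence of Davis--L\"uck for the equivariant K-homology/cohomology of proper $G$-CW-complexes, applied to the group $G_\alpha$, and then to restrict attention to the sub-theory cut out by the character of the central $\IZ/n\IZ$. Recall that for any proper $G$-CW-complex $X$ there is a spectral sequence $E_2^{p,q}=H^p_G(X;\underline{K}^q)\Rightarrow K_G^{p+q}(X)$, where $\underline{K}^q$ is the Bredon module $G/H\mapsto K^q_H(\pt)=K^q(BH\text{-part})$; for proper actions $K^q_H(\pt)=R(H)$ when $q$ is even and $0$ when $q$ is odd, since $H$ is finite. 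This is the content one adapts; I would cite \cite{luck-cherncharacters} and \cite{dwyer2008}.

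First I would apply this spectral sequence to the finite proper $G_\alpha$-CW-complex $X$ (with $G_\alpha$ acting through $\rho$, so the central $\IZ/n\IZ$ acts trivially on $X$), giving $E_2^{p,q}=H^p_{G_\alpha}(X;\underline{K}^q_{G_\alpha})\Rightarrow K_{G_\alpha}^{p+q}(X)$. The central extension \eqref{extension} is classified by $\alpha$, and the character of $\IZ/n\IZ$ sending $\sigma$ to $e^{2\pi i/n}$ determines, via the idempotent decomposition of the representation rings $R((G_\alpha)_H)$ along characters of the central $\IZ/n\IZ$, a natural direct summand of the coefficient system $\underline{K}^{*}_{G_\alpha}$ restricted to the orbit category $\OrGF{G_\alpha}{\calfin}$ — and this orbit category is canonically identified with $\OrGF{G}{\calfin}$ because every finite subgroup of $G_\alpha$ maps isomorphically onto its image in $G$ together with the data of the splitting over the center, which is forced. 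Under this identification the summand of $\underline{K}^q_{G_\alpha}$ picked out by the character $e^{2\pi i/n}$ is exactly the Bredon module $\calr_\alpha$ in even degrees and $0$ in odd degrees, by the theorem quoted above identifying $\alpha$-twisted representations of $H$ with representations of $H_\alpha$ on which $\sigma$ acts by $e^{2\pi i/n}$. Since the idempotent splitting of the representation rings is natural in the orbit category and compatible with the differentials, it induces a splitting of the whole spectral sequence, and the summand corresponding to our character is a spectral sequence with $E_2^{p,q}=H^p_G(X;\calr_\alpha)$ for $q$ even, $0$ for $q$ odd, converging to the corresponding summand of $K^{p+q}_{G_\alpha}(X)$, which by the inclusion ${}^\alpha K_G(X)\hookrightarrow K_{G_\alpha}(X)$ and its description is precisely ${}^\alpha K^{p+q}_G(X)$.

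The main obstacle is the bookkeeping needed to see that the character-idempotent decomposition of $K^*_{G_\alpha}(X)$ is compatible with the entire spectral sequence, i.e.\ that $e_\chi$ acts as a map of spectral sequences and that its effect on the $E_2$-page is the claimed replacement of $\underline{K}^*$ by $\calr_\alpha$; this requires checking naturality of the idempotents with respect to restriction and conjugation maps in $\OrGF{G_\alpha}{\calfin}$, which is where one uses that the extension is central. One also must address finiteness hypotheses (the Davis--L\"uck spectral sequence and its convergence as stated need $X$ finite, which is assumed) and the matching of $\calr_\alpha$-functoriality with the functoriality of the $e_\chi$-summand of $\underline{K}^*$, but these are straightforward once the naturality of the central idempotents is in hand. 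An alternative, perhaps cleaner, route is to build the spectral sequence from scratch by filtering $X$ by skeleta and using the twisted equivariant K-theory of the cells ${}^\alpha K^*_G(G/H\times D^n)\cong R_\alpha(H)$ in even degrees; the $E_1$-page is then the Bredon cochain complex $C^*_{\calfin}(X;\calr_\alpha)$ by construction, and $E_2$ is its cohomology — this avoids invoking the idempotent splitting at the cost of re-deriving convergence, which again is routine for finite $X$.
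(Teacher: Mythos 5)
The paper does not actually prove this theorem; it states it after citing Davis--L\"uck for the untwisted spectral sequence and referring to \cite{BEUV} for a more general construction, so there is no internal proof to compare against. Your strategy --- run the Davis--L\"uck Atiyah--Hirzebruch spectral sequence for $X$ viewed as a $G_\alpha$-CW-complex through $\rho$, then split off the summand where the central $\IZ/n\IZ$ acts by $\sigma\mapsto e^{2\pi i/n}$ --- is exactly the adaptation the authors gesture at, and your alternative route (filter by skeleta, using ${}^\alpha K^*_G(G/H\times D^n)\cong R_\alpha(H)$ in even degrees) is also standard.

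There is, however, a false intermediate claim that needs repair. You assert that $\OrGF{G_\alpha}{\calfin}$ ``is canonically identified with $\OrGF{G}{\calfin}$ because every finite subgroup of $G_\alpha$ maps isomorphically onto its image in $G$.'' This is not true: for finite $H\le G$ the preimage $\rho^{-1}(H)=H_\alpha$ is a central extension of $H$ by $\IZ/n\IZ$, not isomorphic to $H$, and $G_\alpha$ also contains finite subgroups (for instance subgroups of the central $\IZ/n\IZ$, or complements when the restricted extension splits) that are not of the form $\rho^{-1}(H)$. What the argument actually requires is weaker and is true: when a $G$-CW-complex $X$ is regarded as a $G_\alpha$-space via $\rho$, the $G_\alpha$-stabilizer of a cell with $G$-stabilizer $H$ is exactly $\rho^{-1}(H)$, so only orbits $G_\alpha/\rho^{-1}(H)$ occur in the Bredon chain complex; and the assignment $G/H\mapsto G_\alpha/\rho^{-1}(H)$ is a fully faithful functor on orbit categories, since $\rho$ surjective and with central kernel gives a bijection $\mor_{\OrGF{G}{\calfin}}(G/H,G/H')\cong\mor_{\OrGF{G_\alpha}{\calfin}}(G_\alpha/\rho^{-1}(H),G_\alpha/\rho^{-1}(H'))$. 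This identifies the $G_\alpha$-Bredon cochain complex of $X$ with a cochain complex over $\OrGF{G}{\calfin}$. The natural direct-sum decomposition $R(\rho^{-1}(H))=\bigoplus_\chi R_\chi(\rho^{-1}(H))$ indexed by characters $\chi$ of the central $\IZ/n\IZ$ --- which exists integrally, since each irreducible restricts to a single character on the center, so no idempotents over $\IZ[1/n]$ are actually invoked despite your phrasing --- is compatible with restriction and conjugation and therefore splits this cochain complex and hence the whole spectral sequence. The $\chi_0=e^{2\pi i/n}$ summand of the coefficient module is $\calr_\alpha$ by the theorem identifying $\alpha$-twisted $H$-representations with $H_\alpha$-representations on which $\sigma$ acts by $e^{2\pi i/n}$, and the $\chi_0$-summand of the abutment $K^*_{G_\alpha}(X)$ is ${}^\alpha K^*_G(X)$ via the stated inclusion. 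With that correction the argument is sound.
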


\section{Pullback of groups}
Let $\Gamma$ be a group as in diagram \eqref{diagrampullback}. In this section we describe the representation ring of a finite subgroup of $\Gamma$ in terms of the representation rings of finite subgroups of $G$, $H$ and $K$. Using that description and the algebraic Kunneth formula we obtain the main result of the paper.

If the group $\Gamma$ comes from a diagram \eqref{diagrampullback} then it is isomorphic to a subgroup of $G\times H$, namely $\Gamma\cong\{(g,h)\in G\times H\mid \pi_1(g)=\pi_2(h)\}$.  
\subsection{The representation ring of a pullback}
Consider a pullback diagram of finite groups

\begin{equation}\label{pullbackrep}
\xymatrix{
	\Lambda \ar[r]^{p_2}\ar[d]_{p_1} & Q\ar[d]_{\pi_2}\\
	P \ar[r]^{\pi_1} &  S}\end{equation}

If we apply the representation ring functor we obtain the following diagram

\begin{equation}\label{pullbackrepring}
\xymatrix{
	R(\Lambda)  & R(Q)\ar[l]^{p_2^*}\\
	R(P)\ar[u]_{p_1^*}  &  R(S).\ar[l]^{\pi_1^*}\ar[u]_{\pi_2^*}}\end{equation}
We will prove that diagram \eqref{pullbackrepring} is a pushout. In other words we have the following theorem.
\begin{theorem}\label{pullbackrepringiso}
	Let $\Lambda$, $Q$, $P$ and $S$ be finite groups as in diagram \eqref{pullbackrep}. There is a ring isomorphism $$m:R(P)\otimes_{R(S)}R(Q)\to R(\Lambda).$$
\end{theorem}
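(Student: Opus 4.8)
The plan is to construct the map $m$ explicitly, then verify it is an isomorphism by exhibiting an inverse (or by a character-theoretic argument over $\IC$). The map $m$ is the natural one induced by the external tensor product of representations: given $[\rho]\in R(P)$ and $[\tau]\in R(Q)$, one forms the representation $(p_1^*\rho)\otimes(p_2^*\tau)$ of $\Lambda$, i.e.\ $\lambda\mapsto\rho(p_1(\lambda))\otimes\tau(p_2(\lambda))$, and sets $m([\rho]\otimes[\tau])=[(p_1^*\rho)\otimes(p_2^*\tau)]$. One first checks this is well defined on the tensor product over $R(S)$: if $[\chi]\in R(S)$, then $p_1^*\pi_1^*\chi=p_2^*\pi_2^*\chi$ as representations of $\Lambda$ (both equal $\lambda\mapsto\chi(\pi_1p_1(\lambda))=\chi(\pi_2p_2(\lambda))$ by commutativity of \eqref{pullbackrep}), so $m$ respects the relation $[\rho\otimes\pi_1^*\chi]\otimes[\tau]=[\rho]\otimes[\pi_2^*\chi\otimes\tau]$. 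Multiplicativity of $m$ is immediate from the definition of the ring structures.

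The cleanest way to see $m$ is an isomorphism is to work over $\IC$ and use characters, since both sides are free abelian groups of finite rank and $m$ is a ring map, so it suffices to prove that $m\otimes\IC$ is an isomorphism of $\IC$-algebras and that $m$ is compatible with the integral structure given by actual representations (surjectivity onto the monoid of representations then yields surjectivity of $m$, and a rank count or injectivity of characters gives injectivity). Recall that $\Lambda\cong\{(x,y)\in P\times Q\mid\pi_1(x)=\pi_2(y)\}$, so a conjugacy class of $\Lambda$ projects to a pair of conjugacy classes with matching image in $S$. The irreducible representations of $\Lambda$ can be described via Clifford theory / Mackey theory for the normal subgroup $\ker p_2\cong\ker\pi_1$: an irreducible of $\Lambda$ is supported on a "block" determined by how irreducibles of $P$ and $Q$ restrict to the common central-type data over $S$. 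Concretely, $R(P)\otimes_{R(S)}R(Q)\otimes\IC$ is the ring of functions on the fiber product of the spectra; $\mathrm{Spec}(R(P)\otimes\IC)$ is the set of conjugacy classes of $P$, and the map to $\mathrm{Spec}(R(S)\otimes\IC)$ is induced by $\pi_1$ on conjugacy classes. So $\mathrm{Spec}\big((R(P)\otimes_{R(S)}R(Q))\otimes\IC\big)$ is the fiber product of conjugacy-class sets, which is exactly the set of conjugacy classes of $\Lambda$; one checks this bijection is the one induced by $m$ on spectra, hence $m\otimes\IC$ is an isomorphism.

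The remaining point is the integrality: one must rule out that the image of $m$ is a proper finite-index subring. Here I would argue that every irreducible $\alpha$-free (ordinary) representation $W$ of $\Lambda$ is a summand of some $(p_1^*V_1)\otimes(p_2^*V_2)$. Using Frobenius reciprocity, $W$ appears in $p_1^*V_1$ for $V_1$ an irreducible of $P$ with $\langle \mathrm{Res}^{\Lambda}_{?}, \dots\rangle\neq 0$; combining with the $Q$-side via the double-coset decomposition of $P\backslash(P\times Q)/Q$ restricted to $\Lambda$ and the fact that $\ker p_1$ and $\ker p_2$ together generate $\Lambda$ over $S$, one gets that the classes $[(p_1^*V_1)\otimes(p_2^*V_2)]$ span $R(\Lambda)$ over $\IZ$. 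I expect this integral surjectivity to be the main obstacle: the character/spectrum argument gives the isomorphism after $\otimes\IC$ essentially for free, but pinning down that the tensor products of pulled-back representations already exhaust $R(\Lambda)$ over $\IZ$ (equivalently, that $R(P)\otimes_{R(S)}R(Q)$ has no extra torsion or index) requires the representation-theoretic bookkeeping of how irreducibles of $\Lambda$ decompose in terms of those of $P$, $Q$, $S$ — this is where one genuinely uses that \eqref{pullbackrep} is a pullback of \emph{finite} groups and not merely a commutative square.
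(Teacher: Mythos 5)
Your construction of $m$ and the well-definedness check over $R(S)$ agree with the paper's proof, and the multiplicativity observation is fine. The gap is in the step you treat as ``essentially for free,'' not in the integrality step you flagged.

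You assert that the fiber product $\mathrm{conj}(P)\times_{\mathrm{conj}(S)}\mathrm{conj}(Q)$ is ``exactly the set of conjugacy classes of $\Lambda$.'' That is not true. The natural map $\mathrm{conj}(\Lambda)\to\mathrm{conj}(P)\times_{\mathrm{conj}(S)}\mathrm{conj}(Q)$, $[(p,q)]\mapsto([p]_P,[q]_Q)$, is well defined, but it need not be injective: two elements of $\Lambda$ can be conjugate in $P$ and in $Q$ separately without the conjugating pair lying in $\Lambda$. Take $P=Q=S_3$, $S=\IZ/2\IZ$, $\pi_1=\pi_2=\mathrm{sgn}$, so $\Lambda=\{(a,b)\in S_3\times S_3:\mathrm{sgn}(a)=\mathrm{sgn}(b)\}$, of order $18$. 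Then $((123),(123))$ and $((123),(132))$ are not conjugate in $\Lambda$ (the first coordinate forces a conjugator with even first entry, the second forces an odd second entry), yet they hit the same point of the fiber product. Counting, $\mathrm{conj}(\Lambda)$ has $6$ elements while the fiber product has $5$; equivalently, $R(P)\otimes_{R(S)}R(Q)$ has rank $5$ and $R(\Lambda)$ has rank $6$, so $m\otimes\IC$ is injective but not surjective. (Concretely, $V\boxtimes V$ restricts to $\Lambda$ as $W_1\oplus W_2$ with $W_1$, $W_2$ nonisomorphic irreducibles, and only $W_1+W_2$, never $W_1$ alone, lies in the image of $m$.) So the $\mathrm{Spec}$ argument fails already over $\IC$, and no Clifford/Mackey bookkeeping over $\IZ$ can repair a map that is not rationally surjective; the subtlety you anticipated is in the wrong place.

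For comparison, the paper does not reason about spectra at all: it fits $m$ into a map of short exact sequences comparing $R(P)\otimes_\IZ R(Q)\to R(P)\otimes_{R(S)}R(Q)$ with $R(P\times Q)\to R(\Lambda)$ and matches the kernels by a character-orthogonality computation, using the pullback structure (in particular, that conjugating elements of $S$ lift to $Q$) at a specific point. If you want to salvage your route you must first pin down a hypothesis that excludes the $S_3\times_{\IZ/2\IZ}S_3$ behavior above and prove, rather than assert, that $\mathrm{conj}(\Lambda)$ maps bijectively to the fiber product of conjugacy-class sets.
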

\begin{proof}
	In order to avoid confusion, in this proof we denote the product on $R(\Lambda)$, $R(P)$ and $R(Q)$ by $\cdot$ \ and the generators of the tensor product by $\rho\otimes\gamma$.
	
	The map $m$ is defined as
	\begin{align*}
	m:R(P)\otimes_{R(S)}R(Q)&\to R(\Lambda)\\
	\rho\otimes\gamma&\mapsto p_1^*(\rho)\cdot p_2^*(\gamma).\end{align*}

		First we prove that the map $m$ is well defined. Let $\xi\in R(S)$, $\rho\in R(P)$ and $\gamma\in R(Q)$, it is enough to prove that the elements 
		$$m(\pi_1^*(\xi)\cdot\rho\otimes\gamma)\text{\  and \ }m(\rho\otimes\pi_2^*(\xi)\cdot\gamma)$$in $R(\Lambda)$ have the same character.
		Let $(g,h)\in\Lambda$
		\begin{align*}
		\charr\big(m(\pi_1^*(\xi)\cdot\rho\otimes\gamma)\big)(g,h)&=\charr\big(p_1^*(\pi_1^*(\xi))\cdot p_1^*(\rho)\cdot p_2^*(\gamma)\big)(g,h)\\
		&=\charr(\pi_1^*(\xi)\cdot\rho)(g)\charr(\gamma)(h)\\
		&=\charr(\xi)(\pi_1(g))\charr(\rho)(g)\charr(\gamma)(h)\\
		&=\charr(\rho)(g)\charr(\xi)(\pi_2(h))\charr(\gamma)(h)\\
		&=\charr\big(m(\rho\otimes\pi_2^*(\xi)\cdot\gamma)\big)(g,h)
		\end{align*}
		Now we will prove that $m$ is an isomorphism.
		Consider the following diagram with exact rows
		$$\xymatrix{
			0\ar[r]&\ker(\pi)\ar[r]\ar[d]^{m_2}& R(P)\otimes_\IZ R(Q)\ar[r]^{\pi}\ar[d]^{m_1}&R(P)\otimes_{R(S)}R(Q)\ar[r]\ar[d]^{m}&0\\
			0\ar[r] &\ker(i^*)\ar[r]&R(P\times Q)\ar[r]^{i^*}&R(\Lambda)\ar[r]&0  
			}$$
		where map $\pi$ is the quotient by all relations defining the tensor product over $R(S)$, the map $i^*$ is the pullback of the inclusion $i:\Lambda\to P\times Q$, the map $m_1$ is the natural isomorphism given by  tensor product over $\IZ$ and the map $m_2$ is the restriction of $m_1$ to $\ker(\pi)$. We will prove that the above diagram is commutative and that $m_2$ is an isomorphism. 
	
	First we need to verify that $m_1(\ker(\pi))\subseteq \ker(i^*)$.
	Let $(g,h)\in \Lambda$,
	\begin{align*}
	\charr\big(i^*&(m_2(\pi_1^*(\xi)\cdot\rho\otimes\gamma-\rho\otimes\pi_2^*(\xi)\cdot\gamma))\big)(g,h)= \\&\charr \big(p_1^*(\pi_1^*(\xi))\cdot p_1^*(\rho)\cdot p_2^*(\gamma)-p_1^*(\rho)\cdot p_2^*(\pi_2^*(\xi))\cdot p_2^*(\gamma)\big)(g,h)=0
	\end{align*}
	
	Now we prove that $\ker(i^*)= m_1(\ker(\pi))$. For this we will prove that if $f$ is a class function in $P\times Q$ such that $i^*(f)\equiv 0$ and $f$ is orthogonal to every character in $m_1(\ker(\pi))$, then $f$ has to be zero.
	
	Suppose that for every $\xi\in R(S)$, $\rho\in R(P)$ and $\gamma \in R(Q)$
	$$\sum_{(g,h)\in P\times Q}\overline{f(g,h)}\charr(\rho)(g)\charr(\gamma)(h)[\charr(\xi)(\pi_2(h))-\charr(\xi)(\pi_1(g))]=0.$$
	Let us fix $\rho\in R(P)$ and let
	$$\eta(g)=\sum_h \overline{f(g,h)}\charr(\gamma)(h)[\charr(\xi)(\pi_2(h))-\charr(\xi)(\pi_1(g))].$$
	We observe that $\eta$ is a class function on $P$ that is orthogonal to every $\rho$ in $R(P)$, then $\eta\equiv0$.
	
	By a similar argument we conclude that for every $(g,h)\in P\times Q$ and $\xi\in R(S)$
	
	\begin{equation}\label{caracter}\overline{f(g,h)}[\charr(\xi)(\pi_2(h))-\charr(\xi)(\pi_1(g))]=0.\end{equation}
	
	We already know that $f(g,h)=0$ if $(g,h)\in\Lambda$, then let $(g,h)\notin \Lambda$, we have two cases. First suppose that $\pi_1(g)$ is conjugate to $\pi_2(h)$ in $S$, in this case there is $\bar{h}\in Q$ such that $(g,\bar{h}h\bar{h}^{-1})\in\Lambda$ and then $f(g,h)=f(g,\bar{h}h\bar{h}^{-1})=0$.
	
	Suppose now that $\pi_1(g)$ is not conjugate to $\pi_2(h)$ in $S$, in this case there is  $\xi\in R(S)$ such that $\charr(\xi)(\pi_1(g))\neq\charr(\xi)(\pi_2(g))$ and equation \eqref{caracter} gives us that $f(g,h)=0$. Then we conclude that $\ker(i^*)=m_1(\ker(\pi))$. The map $m_2$ is an isomorphism because it is the restriction of $m_1$ and since the diagram is commutative we conclude that $m$ is a ring isomorphism.
	\end{proof}
	An analogous result is valid considering the twisted representation group.
	\begin{corollary}
		Let $\Lambda$, $Q$, $P$ and $S$ be finite groups as in the diagram \eqref{pullbackrep}. and let $\alpha\in Z^2(P;\IZ/n\IZ)$ be a normalized torsion cocycle. There is an isomorphism of $R(S)$-modules $$m:R_\alpha(P)\otimes_{R(S)}R(Q)\to R_{p_1^*(\alpha)}(\Lambda).$$
	\end{corollary}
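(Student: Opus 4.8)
The plan is to deduce this from Theorem \ref{pullbackrepringiso} by passing to central extensions and then isolating the correct graded summand, rather than repeating the character computation.

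First I would attach to $\alpha$ its central extension $1\to\IZ/n\IZ\to P_\alpha\to P\to 1$, and to the normalized torsion cocycle $p_1^*(\alpha)\in Z^2(\Lambda;\IZ/n\IZ)$ its central extension $\Lambda_{p_1^*(\alpha)}$, both as in \eqref{extension}. Since the extension attached to a $2$-cocycle is compatible with pullback, $\Lambda_{p_1^*(\alpha)}$ is the fibre product of $P_\alpha\to P$ with $p_1\colon\Lambda\to P$; combining this with $\Lambda\cong P\times_S Q$ and pasting pullback squares, one obtains (and checks on elements) an isomorphism $\Lambda_{p_1^*(\alpha)}\cong P_\alpha\times_S Q$, where $P_\alpha\to S$ is $\pi_1$ composed with $P_\alpha\to P$ and $Q\to S$ is $\pi_2$, under which the central $\IZ/n\IZ\subseteq\Lambda_{p_1^*(\alpha)}$ maps isomorphically onto the central $\IZ/n\IZ\subseteq P_\alpha$ inside the first factor, while the projection to $Q$ kills it. Applying Theorem \ref{pullbackrepringiso} to the pullback square with corners $\Lambda_{p_1^*(\alpha)}$, $P_\alpha$, $Q$, $S$ then gives a ring isomorphism
$$m'\colon R(P_\alpha)\otimes_{R(S)}R(Q)\ \xrightarrow{\ \cong\ }\ R(\Lambda_{p_1^*(\alpha)}),\qquad m'(\rho\otimes\gamma)=\pr_1^*(\rho)\cdot\pr_2^*(\gamma),$$
with $\pr_1\colon\Lambda_{p_1^*(\alpha)}\to P_\alpha$ and $\pr_2\colon\Lambda_{p_1^*(\alpha)}\to Q$ the two projections.

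Next I would equip both sides with the $\IZ/n\IZ$-grading obtained by restricting representations to the relevant central $\IZ/n\IZ$. On $R(P_\alpha)$ this is the decomposition $R(P_\alpha)=\bigoplus_{j}R_{j\alpha}(P)$ into $R(S)$-submodules, where the degree-$j$ summand consists of the classes on which the fixed generator $\sigma$ acts by $e^{2\pi ij/n}$, identified with $j\alpha$-twisted representations of $P$ via the correspondence between twisted representations and representations of the extension; since $R(S)$ lies in degree $0$ and $-\otimes_{R(S)}R(Q)$ commutes with direct sums, this induces a grading on the left-hand side whose degree-$1$ part is $R_\alpha(P)\otimes_{R(S)}R(Q)$, and likewise $R(\Lambda_{p_1^*(\alpha)})$ is graded with degree-$1$ part $R_{p_1^*(\alpha)}(\Lambda)$. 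The key point is that $m'$ respects these gradings: because $\pr_1$ is an isomorphism on central $\IZ/n\IZ$'s and $\pr_2$ is trivial on them, $\pr_1^*$ preserves degree and $\pr_2^*$ lands in degree $0$, so $m'(\rho\otimes\gamma)$ has the same degree as $\rho$. A graded isomorphism restricts to an isomorphism on each homogeneous component; taking degree-$1$ parts — which are $R(S)$-modules, $R(S)$ being in degree $0$, and on which $m'$ is $R(S)$-linear — yields the desired isomorphism $m\colon R_\alpha(P)\otimes_{R(S)}R(Q)\to R_{p_1^*(\alpha)}(\Lambda)$ of $R(S)$-modules.

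The extension bookkeeping is routine; the step I expect to need the most care is verifying that the ring isomorphism supplied by Theorem \ref{pullbackrepringiso} is homogeneous for the two gradings, which is exactly where the formula $m'(\rho\otimes\gamma)=\pr_1^*(\rho)\cdot\pr_2^*(\gamma)$ and the behaviour of $\pr_1,\pr_2$ on the central subgroups enter. As an alternative, one could imitate the proof of Theorem \ref{pullbackrepringiso} verbatim, replacing ordinary characters of $P$ and $\Lambda$ by $\alpha$-twisted and $p_1^*(\alpha)$-twisted characters; the orthogonality relations needed are available (for instance via the identifications with honest characters of $P_\alpha$ and $\Lambda_{p_1^*(\alpha)}$), so that route works as well but is essentially the same argument in disguise.
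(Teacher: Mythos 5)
Your argument is correct, and it takes a genuinely different route from what the paper has in mind. The paper offers no explicit proof of this corollary; it merely asserts that "an analogous result is valid," meaning: rerun the character computation from the proof of Theorem~\ref{pullbackrepringiso} with ordinary characters of $P$ and $\Lambda$ replaced by $\alpha$-twisted and $p_1^*(\alpha)$-twisted (projective) characters, using the corresponding orthogonality relations. That is exactly your stated alternative, and it works, but it repeats the argument. Your primary route instead reduces the twisted statement to the already-proved untwisted one: you identify $\Lambda_{p_1^*(\alpha)}$ with the fibre product $P_\alpha\times_S Q$ by pasting the pullback square defining the extension with the square \eqref{pullbackrep}, invoke Theorem~\ref{pullbackrepringiso} for that square to get a ring isomorphism $R(P_\alpha)\otimes_{R(S)}R(Q)\cong R(\Lambda_{p_1^*(\alpha)})$, and then observe that both sides carry a $\IZ/n\IZ$-grading by the central subgroup (with $R(S)$, $R(Q)$ in degree $0$, and with $R_\alpha(P)$, $R_{p_1^*(\alpha)}(\Lambda)$ as the degree-$1$ pieces via the paper's correspondence between twisted representations and representations of the extension). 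The verification that $m'$ is homogeneous — $\pr_1$ restricts to an isomorphism of the central $\IZ/n\IZ$'s while $\pr_2$ kills it, so $\pr_1^*$ preserves degree and $\pr_2^*$ lands in degree $0$ — is exactly the step that makes the reduction go through, and a graded bijective homomorphism restricts to an isomorphism on each homogeneous component. This is a cleaner derivation than the paper's implicit one: it uses Theorem~\ref{pullbackrepringiso} as a black box, avoids redoing orthogonality estimates in the projective setting, and makes the structural identity $\Lambda_{p_1^*(\alpha)}\cong P_\alpha\times_S Q$ explicit. As a bonus, the same argument gives all the degree-$j$ isomorphisms $R_{j\alpha}(P)\otimes_{R(S)}R(Q)\cong R_{j\,p_1^*(\alpha)}(\Lambda)$ simultaneously, not only $j=1$.
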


\subsection{Bredon cohomology of pullbacks}


If $X$ is a proper $G$-CW-complex and $Y$ is a proper $H$-CW-complex, the product $X\times Y$ has a natural structure of ($G\times H$)-CW-complex with each cell corresponding to a product of cells of $X$ and $Y$. Since $\Gamma$ is a subgroup of $G\times H$ by Corollary 3.4.4 on \cite{maysid} we can suppose that $X\times Y$ has a structure of $\Gamma$-CW-complex. For this $\Gamma$-space we have the following version of the Eilenberg-Zilber theorem.


\begin{proposition}[Eilenberg-Zilber]\label{eilenberg-zilber}Let $X$ be a proper $G$-CW-complex and $Y$ be a proper $H$-CW-complex. There is a natural equivalence of 
$\OrGF{\Gamma}{\calfin_G\times_K\calfin_H}$-chain complexes  
		$${\underbar{C}}_{*}^{\,\Gamma}(X\times Y)\cong 
		(\underbar{C}_*^{\,G}(X)\otimes\underbar{C}_*^{\,H}(Y))|_\Gamma.
		$$
		
\end{proposition}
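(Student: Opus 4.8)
The plan is to reduce the statement to the classical (non-equivariant) Eilenberg--Zilber theorem applied orbit-by-orbit, and to check that the resulting isomorphisms are compatible with the morphisms of the orbit category $\OrGF{\Gamma}{\calfin_G\times_K\calfin_H}$. First I would recall that for a product $X\times Y$ with the product CW-structure, and for a subgroup $L=P\times_{\pi_1(P)}Q\le\Gamma$ with $P\in\calfin_G$, $Q\in\calfin_H$, the fixed-point set satisfies $(X\times Y)^{L}=X^{P}\times Y^{Q}$; this is where the hypothesis that $L$ lies in $\calfin_G\times_K\calfin_H$ is used, since such $L$ acts on the first factor through $P$ and on the second through $Q$. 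Consequently the skeletal filtration of $(X\times Y)^{L}$ is the product filtration of $X^{P}$ and $Y^{Q}$, and the relative cellular chain groups decompose accordingly. Evaluating $\underbar{C}_*^{\,\Gamma}(X\times Y)$ at $\Gamma/L$ thus gives the cellular chain complex $C_*\bigl(X^{P}\times Y^{Q}\bigr)$, while $(\underbar{C}_*^{\,G}(X)\otimes\underbar{C}_*^{\,H}(Y))|_\Gamma$ evaluated at the same object gives $C_*(X^{P})\otimes_\IZ C_*(Y^{Q})$.

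Next I would invoke the classical Eilenberg--Zilber theorem: for CW-complexes $A$ and $B$ there is a chain homotopy equivalence (indeed, for the cellular chain complexes of a product CW-complex, an actual isomorphism)
\[
\mathrm{EZ}_{A,B}\colon C_*(A\times B)\xrightarrow{\ \cong\ }C_*(A)\otimes_\IZ C_*(B),
\]
natural in $A$ and $B$, given on cells by $e\times f\mapsto e\otimes f$. Applying this with $A=X^{P}$, $B=Y^{Q}$ for every object $\Gamma/L$ of $\OrGF{\Gamma}{\calfin_G\times_K\calfin_H}$ produces candidate isomorphisms at each object. The main work is then to promote this pointwise collection into a natural transformation of $\OrGF{\Gamma}{\calfin_G\times_K\calfin_H}$-chain complexes, i.e.\ to verify that for every morphism $\Gamma/L\to\Gamma/L'$ in the orbit category the obvious square commutes. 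A morphism here is multiplication by an element $\gamma=(g,h)\in\Gamma$ with $\gamma L\gamma^{-1}\le L'$; on fixed-point sets it is induced by the pair of inclusions/conjugations $X^{P'}\hookrightarrow X^{gPg^{-1}}\xrightarrow{g}X^{P}$ and $Y^{Q'}\hookrightarrow Y^{hQh^{-1}}\xrightarrow{h}Y^{Q}$. Since these are themselves cellular maps of CW-complexes, the required commutativity is exactly the naturality of $\mathrm{EZ}$ in both variables, which holds because the cellular cross-product of cells is visibly functorial. The boundary maps match because the Koszul-sign formula for $\partial(e\otimes f)$ is precisely the cellular boundary of $e\times f$ under the product CW-structure.

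The step I expect to require the most care is the bookkeeping identifying the source and target functors on objects and, in particular, checking that the decomposition $(X\times Y)^{L}=X^{P}\times Y^{Q}$ is compatible with the chosen $\Gamma$-CW-structure coming from Corollary 3.4.4 of \cite{maysid} — a priori that corollary only guarantees \emph{some} $\Gamma$-CW-structure, so one must note that the product CW-structure on $X\times Y$ already has all cell stabilizers in $\calfin_G\times_K\calfin_H$ (a cell $e\times f$ with $G_e=P$, $H_f=Q$ has $\Gamma$-stabilizer $P\times_{\pi_1(P)}Q$) and hence serves directly. Once the object-level identification is fixed, the morphism-level compatibility and the comparison of differentials are routine naturality statements, and assembling them yields the asserted equivalence of $\OrGF{\Gamma}{\calfin_G\times_K\calfin_H}$-chain complexes. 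Since $\mathrm{EZ}$ is a genuine isomorphism of cellular chain complexes in the product CW-structure (not merely a homotopy equivalence), the resulting map is an isomorphism of Bredon chain complexes, which is the stated natural equivalence.
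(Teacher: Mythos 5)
Your proof follows the same route as the paper's: evaluate both sides at $\Gamma/(P\times_{\pi_1(P)}Q)$, identify $(X\times Y)^{P\times_{\pi_1(P)}Q}=X^P\times Y^Q$, take the pointwise cellular Eilenberg--Zilber isomorphism $e\times f\mapsto e\otimes f$, and check naturality against the orbit-category morphisms, noting that for the product CW-structure this is an honest isomorphism rather than only a chain homotopy equivalence. Your added observation that the product CW-structure on $X\times Y$ already has all $\Gamma$-cell stabilizers in $\calfin_G\times_K\calfin_H$ (so Corollary~3.4.4 of \cite{maysid} need not replace the CW-structure, and the object-level identification is safe) is a useful clarification that the paper leaves implicit.
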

\begin{proof}
Let $P\times_{\pi_1(P)}Q$ be an element in $\calfin(G)\times_K\calfin(H)$. It is a well know fact that we have an isomorphism of chain complexes
\begin{align*}
f_{P,Q}:C_*(X^P\times Y^Q)&\rightarrow C_*(X^P)\otimes_\IZ C_*(Y^Q)\\
e\times f& \to e\otimes f.\end{align*}
We only need to verify that it is a natural transformation. Consider $$(p,q)_*:\Gamma/P\times_{\pi_1(P)}Q\to \Gamma/P'\times_{\pi_1(P')}Q'$$ a morphism in $\OrGF{\Gamma}{\calfin(G)\times_K\calfin(H)}.$

We need to prove that the following diagram commutes
$$\xymatrix{C_n((X\times Y)^{P\times_{\pi_1(P)}Q})\ar[rr]^{(p,q)_*}\ar[d]^{f_{P,Q}}&&C_n((X\times Y)^{P'\times_{\pi_1(P')}Q'})\ar[d]^{f_{P',Q'}}\\
	\bigoplus\limits_{i+j=n}C_i(X^P)\otimes_\IZ C_j(Y^Q)\ar[rr]^{\oplus (p_*\otimes q_*)}&&\bigoplus\limits_{i+j=n}C_i(X^{P'})\otimes_\IZ C_j(Y^{Q'})}.$$

But this is clear because
$$(p_*\otimes q_*)\circ f_{P,Q}(e\times f)=(p_*\otimes q_*)(e\otimes f)=pe\otimes qf,$$
and
$$f_{P,Q}\circ((p,q)_*(e\times f))=f_{P,Q}(pe\times qf)=pe\times qf.$$
	\end{proof}
Then we have a natural equivalence of Bredon modules analogous to the isomorphism in Theorem \ref{pullbackrepringiso}.
\begin{theorem}
	There is a natural equivalence of Bredon modules
	$$\calr^G\otimes_{\calr^K}\calr^H\xrightarrow{\bar{m}}\calr^\Gamma.$$
\end{theorem}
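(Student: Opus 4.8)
The plan is to assemble the natural transformation $\bar m$ objectwise from the ring isomorphisms $m$ of Theorem \ref{pullbackrepringiso}, and then check naturality using the explicit description of morphisms in the orbit category. First I would recall that an object of $\OrGF{\Gamma}{\calfin_G\times_K\calfin_H}$ has the form $\Gamma/(P\times_{\pi_1(P)}Q)$ with $P\in\calfin_G$, $Q\in\calfin_H$, and that by the very definition of the tensor product of Bredon modules given in Section 1 we have
$$(\calr^G\otimes_{\calr^K}\calr^H)(\Gamma/(P\times_{\pi_1(P)}Q))=R(P)\otimes_{R(K/\pi_1(P))}R(Q).$$
Here one uses that $\pi_1(P)=\pi_2(Q)$ for the subgroups occurring in $\calfin_G\times_K\calfin_H$ (both equal the common image in $K$), so $K/\pi_1(P)$ is the relevant object of $\OrGF{K}{\pi_1(\calfin_G)}$ and $R(K/\pi_1(P))=R(\pi_1(P))$. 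On the other side, $\calr^\Gamma(\Gamma/(P\times_{\pi_1(P)}Q))=R(P\times_{\pi_1(P)}Q)$. Since $P\times_{\pi_1(P)}Q$ is a finite group fitting into a pullback square of finite groups exactly of the shape \eqref{pullbackrep}, Theorem \ref{pullbackrepringiso} supplies a ring isomorphism
$$\bar m_{P,Q}\colon R(P)\otimes_{R(\pi_1(P))}R(Q)\xrightarrow{\ \cong\ }R(P\times_{\pi_1(P)}Q),\qquad \rho\otimes\gamma\mapsto p_1^*(\rho)\cdot p_2^*(\gamma).$$
This defines $\bar m$ on objects and already gives an isomorphism at each object; it remains only to verify that the collection $\{\bar m_{P,Q}\}$ is natural.

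Next I would verify naturality. A morphism $(p,q)_*\colon \Gamma/(P\times_{\pi_1(P)}Q)\to\Gamma/(P'\times_{\pi_1(P')}Q')$ in the orbit category is given by an element $(p,q)\in\Gamma\subseteq G\times H$ conjugating $P\times_{\pi_1(P)}Q$ into $P'\times_{\pi_1(P')}Q'$, equivalently $pPp^{-1}\subseteq P'$ and $qQq^{-1}\subseteq Q'$ (with $\pi_1(p)=\pi_2(q)$). Under $\calr^\Gamma$ this induces the composite of restriction along $(p,q)(P\times_{\pi_1(P)}Q)(p,q)^{-1}\hookrightarrow P'\times_{\pi_1(P')}Q'$ with the conjugation isomorphism by $(p,q)$, exactly as in the definition of $\calr_\alpha$ on morphisms recalled earlier. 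Under $\calr^G\otimes_{\calr^K}\calr^H$ the induced map is, by the formula for the tensor product on morphisms, $\calr^G(p_*)\otimes\calr^H(q_*)$, i.e. $(\text{restr}\circ c_p)\otimes(\text{restr}\circ c_q)$. So on a generator $\rho\otimes\gamma$ one side gives $\bar m_{P,Q}$ applied to $c_p^*i^*(\rho)\otimes c_q^*i^*(\gamma)$, hence $p_1^*(c_p^*i^*\rho)\cdot p_2^*(c_q^*i^*\gamma)$ in $R(P\times_{\pi_1(P)}Q)$, while the other side gives $c_{(p,q)}^* i^*\big(p_1^*(\rho)\cdot p_2^*(\gamma)\big)$. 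Because $p_1,p_2$ are the structure maps of the pullbacks and they intertwine the inclusions $P\times_{\pi_1(P)}Q\hookrightarrow P\times Q$, $P'\times_{\pi_1(P')}Q'\hookrightarrow P'\times Q'$ with conjugation by $(p,q)$ versus by $p$ and $q$ separately, these two expressions agree; I would check this by comparing characters at a point $(g,h)\in P\times_{\pi_1(P)}Q$, using $\charr(p_1^*(\xi))(g,h)=\charr(\xi)(g)$ and $\charr(p_2^*(\xi))(g,h)=\charr(\xi)(h)$ together with $p\cdot g=$ first coordinate of $(p,q)(g,h)(p,q)^{-1}$ and similarly for the second coordinate. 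This is a direct computation of the same flavor as the well-definedness check in the proof of Theorem \ref{pullbackrepringiso}.

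The main obstacle, such as it is, is bookkeeping: one must be careful that the family $\calfin_G\times_K\calfin_H$ consists precisely of the finite subgroups of $\Gamma$ arising as fiber products $P\times_{\pi_1(P)}Q$, so that every object of $\OrGF{\Gamma}{\calfin_G\times_K\calfin_H}$ is covered, and one must check that a morphism in this orbit category indeed decomposes into a $G$-part and an $H$-part compatible over $K$ (this uses $\Gamma\cong\{(g,h):\pi_1(g)=\pi_2(h)\}$). Granting those structural points, naturality reduces to the identity $\bar m_{P',Q'}\circ(\calr^G(p_*)\otimes\calr^H(q_*))=\calr^\Gamma((p,q)_*)\circ\bar m_{P,Q}$, which is the character computation above; combined with the objectwise isomorphism from Theorem \ref{pullbackrepringiso}, this proves $\bar m$ is a natural equivalence of Bredon modules. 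One may also remark that the same argument, replacing $\calr^G$ by $\calr_\alpha^G$ and invoking the Corollary after Theorem \ref{pullbackrepringiso} instead, yields a twisted version $\calr_{p_1^*\alpha}^G\otimes_{\calr^K}\calr^H\xrightarrow{\simeq}\calr_{(\pi_1 p_1)^*\alpha}^\Gamma$.
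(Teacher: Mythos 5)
Your proposal matches the paper's proof: both define $\bar m$ objectwise via the isomorphisms $m_{P,Q}$ from Theorem \ref{pullbackrepringiso} and then verify naturality on a morphism $(p,q)_*$ of the orbit category, the paper compressing your character computation into the observation that pullbacks commute with restrictions on representation rings. Your closing remark on the twisted variant is likewise the content of the Corollary following Theorem \ref{pullbackrepringiso}.
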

\begin{proof}
	The map $\bar{m}$ is defined using the isomorphism in Theorem \ref{pullbackrepringiso}. Let $\Gamma/(P\times_{\pi_1(P)}Q)\in\OrGF{\Gamma}{\calfin_G\times_K\calfin_H}$, we define $$\bar{m}(\Gamma/(P\times_{\pi_1(P)}Q))=m_{P,Q},$$ where $m_{P,Q}$ is the isomorphism in Theorem \ref{pullbackrepringiso} for the group $P\times_{\pi_1(P)}Q$. We only need to verify that this map is natural. Consider a morphism $$(g,h):\Gamma/(P\times_{\pi_1(P)}Q)\to \Gamma/(P'\times_{\pi_1(P')}Q'),$$recall that this morphism is characterized by the condition $$(P\times_{\pi_1(P)}Q)^{(g,h)}\subseteq (P'\times_{\pi_1(P')}Q').$$
	Following the notation in the proof of Theorem \ref{pullbackrepringiso}, we have
	$$(g,h)^*(m(\xi\otimes \gamma))=(g,h)^*(p_1^*(\xi)\cdot p_2^*(\gamma))=\left(p_1^*(\xi)\cdot p_2^*(\gamma)\right)\mid_{(P\times_{\pi_1(P)}Q)^{(g,h)}}.$$
	On the other hand 

	$$m((g^*\otimes h^*)(\xi\otimes \gamma)=m(\xi\mid_{P^g}\otimes\gamma\mid_{Q^h})=p_1^*(\xi\mid_{P^g})\cdot p_2^*(\gamma\mid_{Q^h})$$
	And the last terms of both expressions are the same because pullbacks commute with restrictions on the representation ring. Then the map $\bar{m}$ is natural.
	\end{proof}
For a Bredon module $\calm$ over $\OrGF{G}{\calf}$ one can define the Bredon cohomology groups of $G$ with coefficients in $\calm$ by
$$H^i(G;\calm)=\Ext^i(\underline{\mathbb{Z}},\calm),$$
where $\Ext^i$ is certain functor generalizing the Ext-functor defined for groups to the context of Bredon modules. For details consult \cite{valettemislin} on pages 7-27.

In particular we have the following result
\begin{proposition}There is an isomorphism
	$$H^0(G;\calm)\cong\colim_{G/K\in\OrGF{G}{\calf}}\calm(G/K),$$
	where we are taking the inverse limit with respect to the induced morphism from the orbit category $\OrGF{G}{\calf}$.
\end{proposition}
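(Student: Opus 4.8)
The statement to be proved is that for a Bredon module $\calm$ over $\OrGF{G}{\calf}$ one has $H^0(G;\calm)\cong\colim_{G/K\in\OrGF{G}{\calf}}\calm(G/K)$, where $H^0(G;\calm)=\Ext^0(\underline{\IZ},\calm)$. The plan is to unwind both sides to a concrete description and match them. First I would recall that $\Ext^0(\underline{\IZ},\calm)=\Hom_{\OrGF{G}{\calf}}(\underline{\IZ},\calm)$, the group of natural transformations of contravariant functors $\OrGF{G}{\calf}\to\IZ-\MODULES$ from the constant functor $\underline{\IZ}$ to $\calm$. A natural transformation $\theta\colon\underline{\IZ}\to\calm$ assigns to each object $G/K$ a homomorphism $\theta_{G/K}\colon\IZ\to\calm(G/K)$, hence an element $x_K:=\theta_{G/K}(1)\in\calm(G/K)$, and naturality with respect to a morphism $\phi\colon G/K\to G/K'$ forces $\calm(\phi)(x_{K'})=x_K$ (since $\underline{\IZ}(\phi)=\id$). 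Thus $\Hom_{\OrGF{G}{\calf}}(\underline{\IZ},\calm)$ is exactly the set of compatible families $(x_K)_K$ with $x_K\in\calm(G/K)$ and $\calm(\phi)(x_{K'})=x_K$ for every morphism $\phi$ — that is, the inverse limit $\lim_{G/K}\calm(G/K)$ over the orbit category.

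Here I would be careful about a clash of conventions in the excerpt: $\calm$ is a contravariant functor, so the natural candidate for $H^0$ is the \emph{inverse} limit over $\OrGF{G}{\calf}$, equivalently the \emph{colimit} over the opposite category $\OrGF{G}{\calf}^{\mathrm{op}}$. The proposition as written calls this object a "colim" but the explanatory clause says "inverse limit with respect to the induced morphism from the orbit category"; I would reconcile these by noting that for a covariant functor $\calm$ one would get a colimit, for a contravariant one an inverse limit, and state explicitly which variance is in force, citing the convention in \cite{valettemislin}. So the proof reduces to: (i) identify $\Ext^0(\underline{\IZ},-)$ with the $\Hom$-functor $\Hom_{\OrGF{G}{\calf}}(\underline{\IZ},-)$, which holds because $\Ext^\ast$ is the derived functor of $\Hom$ and $\Ext^0=\Hom$; and (ii) identify $\Hom_{\OrGF{G}{\calf}}(\underline{\IZ},\calm)$ with the (inverse) limit, via the Yoneda-type computation above.

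For step (i) I would invoke the general fact that $\underline{\IZ}$ represents the limit functor: for any $\OrGF{G}{\calf}$-module $\calm$,
\[
\Hom_{\OrGF{G}{\calf}}(\underline{\IZ},\calm)\;\cong\;\varprojlim_{G/K\in\OrGF{G}{\calf}}\calm(G/K),
\]
which is precisely the content of the adjunction between the constant-diagram functor and the limit functor, specialized to the constant diagram at $\IZ$ (the free $\IZ$-module on one generator). This is standard and can be cited from \cite{luck1998} or \cite{valettemislin}. Then $H^0(G;\calm)=\Ext^0(\underline{\IZ},\calm)=\Hom_{\OrGF{G}{\calf}}(\underline{\IZ},\calm)$ gives the result.

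**Main obstacle.** There is essentially no deep difficulty; the only thing requiring genuine care is the variance bookkeeping — making sure the direction of the morphisms in $\OrGF{G}{\calf}$, the contravariance of $\calm$, and the "colim versus lim" wording in the statement all line up, and in particular that "colim" in the displayed isomorphism is read as the colimit over the \emph{opposite} of the orbit category (equivalently the inverse limit over $\OrGF{G}{\calf}$ itself, as the clarifying clause indicates). Once the conventions are pinned down, the proof is the one-line Yoneda/adjunction argument sketched above, with the identification $\Ext^0=\Hom$ supplied by \cite{valettemislin}.
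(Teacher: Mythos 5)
The paper itself does not give a proof of this proposition; it only refers the reader to Mislin--Valette for the underlying framework. Your argument is the standard one and it is correct: identifying $H^0(G;\calm)=\Ext^0(\underline{\IZ},\calm)$ with $\Hom_{\OrGF{G}{\calf}}(\underline{\IZ},\calm)$ (since $\Ext^0=\Hom$), and then observing that a natural transformation $\underline{\IZ}\to\calm$ is exactly a compatible family $(x_K)_K$ with $x_K\in\calm(G/K)$ and $\calm(\phi)(x_{K'})=x_K$ for every orbit morphism $\phi:G/K\to G/K'$, i.e.\ an element of the inverse limit. You are also right to flag the terminological mismatch in the statement: the displayed formula says \emph{colim} but the clarifying clause says \emph{inverse limit}, and the latter is the correct reading precisely because $\calm$ is contravariant, so $\Hom(\underline{\IZ},\calm)$ is a limit, not a colimit.

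One small slip in your reconciliation paragraph: you assert that the inverse limit over $\OrGF{G}{\calf}$ is ``equivalently the colimit over $\OrGF{G}{\calf}^{\mathrm{op}}$.'' That is not right. Regarding $\calm$ as a covariant functor on $\OrGF{G}{\calf}^{\mathrm{op}}$, what $\Hom(\underline{\IZ},\calm)$ computes is the \emph{limit} over $\OrGF{G}{\calf}^{\mathrm{op}}$, not the colimit; limits and colimits only interchange when one simultaneously dualizes the target category, not merely the index category. This misstatement does not affect the actual computation you carry out, which correctly produces the inverse limit.
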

If $\calm$ is a Green functor it endows every $\calm(G/K)$ with an $H^0(G;\calm)$-module structure.
\begin{theorem}\label{eilenberg-zilber-coh}Let $\calM$, $\calM'$ and $\calN$ be Bredon modules over $G$, $H$ and $K$ respectively, suppose that $\calM$ is a $\pi_1^*\calN$-module and $\calM'$ is a $\pi_2^*\calN$-module.  Then there is an isomorphism of $\OrGF{\Gamma}{\calfin_G\times_{\calfin_K}\calfin_H}$-cochain complexes
	\begin{align*}\Hom_{\OrGF{\Gamma}{\calfin}}((\underbar{C}_*(X)\otimes\underbar{C}_*(Y))\mid_\Gamma,&\calM\otimes_{\calN}\calM')\to\\ &C_G^*(X,\calM)\otimes_{H^0(K,\calN)}C_H^*(Y,\calM').\end{align*}
\end{theorem}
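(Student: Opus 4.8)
The plan is to reduce to the case in which $\underbar{C}_*^G(X)$ and $\underbar{C}_*^H(Y)$ are free Bredon modules, where the statement dissolves into the Yoneda lemma combined with the Eilenberg--Zilber equivalence of Proposition~\ref{eilenberg-zilber} and the algebraic input of Theorem~\ref{pullbackrepringiso}. First I would construct a comparison map going from the right-hand side to the left-hand side. Given $\phi\in C_G^p(X,\calM)=\Hom_{\OrGF{G}{\calfin}}(\underbar{C}_p^G(X),\calM)$ and $\psi\in C_H^q(Y,\calM')$, define a natural transformation $\Phi_{\phi,\psi}$ of $\OrGF{\Gamma}{\calfin_G\times_{\calfin_K}\calfin_H}$-modules whose value at an object $\Gamma/(P\times_{\pi_1(P)}Q)$ is the composite
\begin{multline*}
\underbar{C}_p^G(X)(G/P)\otimes_\IZ\underbar{C}_q^H(Y)(H/Q)\xrightarrow{\ \phi_{G/P}\otimes\psi_{H/Q}\ }\calM(G/P)\otimes_\IZ\calM'(H/Q)\\
\twoheadrightarrow\calM(G/P)\otimes_{\calN(K/\pi_1(P))}\calM'(H/Q),
\end{multline*}
the last arrow being the canonical surjection. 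Naturality in $\OrGF{\Gamma}{\calfin_G\times_{\calfin_K}\calfin_H}$ is verified exactly as in the proof of Proposition~\ref{eilenberg-zilber}, using the naturality of $\phi$, $\psi$ and of the $\calN$-module structure maps. The assignment $(\phi,\psi)\mapsto\Phi_{\phi,\psi}$ is $\IZ$-bilinear, is balanced over $H^0(K;\calN)$ (for $z\in H^0(K;\calN)$, acting through its image in $\calN(K/\pi_1(P))$, one slides $z$ across the tensor sign, so $\Phi_{z\phi,\psi}=\Phi_{\phi,z\psi}$), and commutes with the codifferentials by a Leibniz computation, since under Eilenberg--Zilber the boundary of the product complex is $\partial\otimes 1\pm 1\otimes\partial$. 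Hence it induces a morphism of cochain complexes from $C_G^*(X,\calM)\otimes_{H^0(K;\calN)}C_H^*(Y,\calM')$ to the left-hand side of the asserted isomorphism, and it suffices to prove this morphism is an isomorphism, the arrow in the statement being its inverse.

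Next I would reduce to free modules. In each degree $\underbar{C}_n^G(X)=\bigoplus_\sigma\IZ[\OrGF{G}{\calfin}(-,G/G_\sigma)]$, the sum over $G$-orbits of $n$-cells $\sigma$, and similarly for $Y$; assuming $X$ and $Y$ finite (the case relevant for Theorem~\ref{a-h-twisted}) these are finite direct sums, so the comparison map splits as a finite sum indexed by pairs of cells and it is enough to treat $\underbar{C}_p^G(X)=\IZ[\OrGF{G}{\calfin}(-,G/P)]$ and $\underbar{C}_q^H(Y)=\IZ[\OrGF{H}{\calfin}(-,H/Q)]$ for single finite subgroups $P\subseteq G$, $Q\subseteq H$. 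On the right, the Yoneda lemma gives $C_G^p(X,\calM)\cong\calM(G/P)$ and $C_H^q(Y,\calM')\cong\calM'(H/Q)$, so in this bidegree the target is $\calM(G/P)\otimes_{H^0(K;\calN)}\calM'(H/Q)$. On the left, Proposition~\ref{eilenberg-zilber} identifies $(\underbar{C}_p^G(X)\otimes\underbar{C}_q^H(Y))\mid_\Gamma$ with $\underbar{C}_*^\Gamma((G/P)\times(H/Q))$; decomposing the $\Gamma$-set $(G/P)\times(H/Q)$ into orbits $\coprod_i\Gamma/(P_i\times_{\pi_1(P_i)}Q_i)$ and applying Yoneda once more, the source in this bidegree becomes $\prod_i\calM(G/P_i)\otimes_{\calN(K/\pi_1(P_i))}\calM'(H/Q_i)$.

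It then remains to show that $\Phi$ realizes an isomorphism
$$\calM(G/P)\otimes_{H^0(K;\calN)}\calM'(H/Q)\;\xrightarrow{\ \cong\ }\;\prod_i\calM(G/P_i)\otimes_{\calN(K/\pi_1(P_i))}\calM'(H/Q_i),$$
the product running over $\Gamma$-orbits of $(G/P)\times(H/Q)$, and I expect this to be the main obstacle. It is the module-theoretic counterpart of Theorem~\ref{pullbackrepringiso}: the subgroups $P_i\times_{\pi_1(P_i)}Q_i$ are precisely the stabilizers attached to the double cosets that govern the pushout square~\eqref{pullbackrepring}, and one must check that the decomposition of $\calM(G/P)\otimes_{H^0(K;\calN)}\calM'(H/Q)$ according to those components matches the geometric orbit decomposition and is the one induced by $\Phi$. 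For the coefficients $\calN=\calr^K$, $\calM=\calr^G$, $\calM'=\calr^H$ this follows by applying Theorem~\ref{pullbackrepringiso} to each group $P_i\times_{\pi_1(P_i)}Q_i$; for general coefficients one argues directly, using the $\calN$-module structures in place of the character computation carried out there. Granting this identification, reassembling over all bidegrees and all pairs of cells and recalling that $\Phi$ intertwines the codifferentials yields the asserted isomorphism of cochain complexes.
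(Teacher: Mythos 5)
Your route is genuinely different from the paper's. The authors do not build a comparison map and reduce via Yoneda; they directly expand both cochain complexes in each degree as direct sums indexed by pairs of cell orbits $(e_\lambda,f_\mu)$, identifying the left-hand side with $\bigoplus_{\lambda,\mu}\calM(G/P_\lambda)\otimes_{\calN(K/\pi_1(P_\lambda))}\calM'(H/Q_\mu)$ and the right-hand side with $\bigoplus_{\lambda,\mu}\calM(G/P_\lambda)\otimes_{H^0(K;\calN)}\calM'(H/Q_\mu)$, and then argue the two summands agree because the $H^0(K;\calN)$-module structures factor through $\calN(K/\pi_1(P_\lambda))$. Your proposal instead isolates, for each pair of orbit representables, the comparison of $\calM(G/P)\otimes_{H^0(K;\calN)}\calM'(H/Q)$ with $\prod_i\calM(G/P_i)\otimes_{\calN(K/\pi_1(P_i))}\calM'(H/Q_i)$, the product running over the $\Gamma$-orbits of $(G/P)\times(H/Q)$.

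The step you flag as the main obstacle is a genuine gap, and I do not see how to close it in the stated generality. After Yoneda the left-hand side really is the product over $\Gamma$-orbits that you describe, but $(G/P)\times(H/Q)$ is a single $\Gamma$-orbit only under extra hypotheses (essentially $\pi_1(P)\pi_2(Q)=K$); when there are several orbits the ranks of the two sides differ. A minimal instance: take $G=H=K=\IZ/2\IZ$ with $\pi_1=\pi_2=\id$, so $\Gamma$ is the diagonal copy of $\IZ/2\IZ$, take $\calM=\calM'=\calN=\calr$ and $P=Q=\{1\}$. Then $(G/P)\times(H/Q)$ splits into two free $\Gamma$-orbits, so your target is $\IZ^{2}$, while the source is $R(\{1\})\otimes_{R(K)}R(\{1\})\cong\IZ$. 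Theorem~\ref{pullbackrepringiso} does not rescue this, since it identifies $R(P)\otimes_{R(\pi_1(P))}R(Q)$ with $R(P\times_{\pi_1(P)}Q)$, i.e.\ tensoring over $R(\pi_1(P))$, whereas your source tensors over the larger ring $H^0(K;\calN)\cong R(K)$, and that base change is precisely what fails. The paper's write-up, by indexing the left-hand complex by pairs $(\lambda,\mu)$ and silently replacing $H^0(K;\calN)$ by $\calN(K/\pi_1(P_\lambda))$, rests on the same two implicit assumptions; your reduction has the merit of exposing them, but as written the argument does not go through, and completing it would require either restricting the cell stabilizers so that each $G/P_\lambda\times H/Q_\mu$ is a single $\Gamma$-orbit or reorganizing the comparison map around the actual $\Gamma$-cells of $X\times Y$.
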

\begin{proof}
	At degree $n$ the left hand side cochain complex is
	$$\bigoplus_{e_\lambda,f_\mu}\Hom_\IZ\left(\IZ[e_\lambda]\otimes\IZ[f_\mu],\calM(G/P_\lambda)\otimes_{\calN(K/\pi_1(P_\lambda))}\calM'(H/Q_\mu)\right).$$
	
	At degree $n$ the right hand side is
	$$\left(\bigoplus_{e_\lambda}\Hom(\IZ[e_\lambda],\calM(G/P_\lambda))\right)\otimes_{H^0(K,\calN)}\left(\bigoplus_{f_\mu}\Hom(\IZ[f_\mu],\calM'(H/Q_\mu))\right).$$
	There is an isomorphism of cochain complexes of $H^0(K;\calN)$-modules from the right hand side to 
	$$\bigoplus_{\lambda,\mu}\Hom_\IZ\left((\IZ[e_\lambda]\otimes\IZ[f_\mu]),\calM(G/P_\lambda)\otimes_{H^0(K,\calN)}\calM'(H/Q_\mu)\right).$$
	For every $P_\lambda\times_{\pi_1(P_\lambda)}Q_\mu\in\calfin_G\times_K\calfin_H$ we have that the $H^0(K;\caln)$-module structure of $M(G/P_\lambda),$ respectively of $M'(H/Q_\mu)$, factors through the sequences
	$$H^0(K;\caln)\to N(K/\pi_1(P_\lambda))\to M(G/P_\lambda) $$
	and$$H^0(K;\caln)\to N(K/\pi_1(P_\lambda))\to M'(H/Q_\mu).$$
	  Then the $H^0(K;\calN)$-module $$\calM(G/P_\lambda)\otimes_{H^0(K,\calN)}\calM'(H/Q_\mu)$$ is isomorphic to $$\calM(G/P_\lambda)\otimes_{\calN(K/\pi_1(P_\lambda))}\calM'(H/Q_\mu).$$ Given that  isomorphism is compatible with the coboundary map, then we have an isomorphism of cochain complexes. 
	
	\end{proof}
	In order to obtain a Kunneth formula for Bredon cohomology of a $\Gamma$-CW-complex $X\times Y$ we need to recall the algebraic Kunneth formula, for a proof see for example \cite[Thm. 3.6.3]{weibel}.
	\begin{lemma}\label{kunnethalg}
		Let $R$ be a commutative ring with unity and let $(C,d)$ and $(C',d')$ be $R$-cochain complexes, if $C_n$ and $d(C_n)$ are flat for each $n$ then there is a split exact sequence
		\begin{align*}0\to \bigoplus\limits_{p+q=n}H^p(C)\otimes_RH^q(C')&\to H^n(C\otimes_RC')\to\\& \bigoplus\limits_{p+q=n+1}\Tor^R(H^p(C),H^q(C')).\end{align*} 
	\end{lemma}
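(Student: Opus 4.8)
The plan is to run the classical double-complex argument, directly in the cochain setting. For $(C,d)$ write $Z_p=\ker(d\colon C_p\to C_{p+1})$ and $B_p=\im(d\colon C_{p-1}\to C_p)$ for the cocycles and coboundaries in cohomological degree $p$, viewed as subcomplexes of $C$ carrying the zero differential, so that $H^p(C)=Z_p/B_p$. The first observation I would record is that each $Z_p$ is flat: it fits in $0\to Z_p\to C_p\xrightarrow{\,d\,}B_{p+1}\to 0$ with $C_p$ and $B_{p+1}=d(C_p)$ flat by hypothesis, and the long exact $\Tor$-sequence then forces $\Tor_k^R(Z_p,N)=0$ for all $k\ge1$ and all $N$. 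Letting $\widetilde B$ denote the complex with $\widetilde B_p=B_{p+1}$ and zero differential, the differential $d$ of $C$ induces the short exact sequence of cochain complexes
$$0\longrightarrow Z\longrightarrow C\xrightarrow{\,d\,}\widetilde B\longrightarrow 0 .$$

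Next I would tensor with $C'$. Since every term of $\widetilde B$ is flat, applying $-\otimes_R C'$ (that is, totalising the tensor product of the associated double complexes) keeps the sequence exact, so $0\to Z\otimes_R C'\to C\otimes_R C'\to\widetilde B\otimes_R C'\to 0$ is again a short exact sequence of cochain complexes. Flatness of $Z_p$ and $\widetilde B_p$ lets tensoring commute with cohomology in the $C'$-variable, giving $H^n(Z\otimes_R C')\cong\bigoplus_{p+q=n}Z_p\otimes_R H^q(C')$ and $H^n(\widetilde B\otimes_R C')\cong\bigoplus_{p+q=n}B_{p+1}\otimes_R H^q(C')$; and a routine diagram chase identifies the connecting homomorphism $\partial$ of the long exact cohomology sequence with the direct sum of the maps $\iota_p\otimes\id_{H^q(C')}$, where $\iota_p\colon B_p\hookrightarrow Z_p$ is the inclusion (the signs are immaterial here).

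With this in hand the long exact sequence collapses, for each $n$, to $0\to\coker\partial\to H^n(C\otimes_R C')\to\ker\partial\to 0$, where the two copies of $\partial$ are the ones landing in, respectively leaving, total degree $n$. To identify the ends I would tensor the short exact sequence $0\to B_p\xrightarrow{\iota_p}Z_p\to H^p(C)\to 0$ with $H^q(C')$; since $Z_p$ is flat this produces
\begin{align*}
0\to\Tor_1^R\big(H^p(C),H^q(C')\big)\to B_p\otimes_R H^q(C')&\xrightarrow{\ \iota_p\otimes\id\ }Z_p\otimes_R H^q(C')\\
&\to H^p(C)\otimes_R H^q(C')\to 0 .
\end{align*}
Reading off kernels and cokernels degree by degree, and absorbing the one-step shift built into $\widetilde B$, identifies $\coker\partial\cong\bigoplus_{p+q=n}H^p(C)\otimes_R H^q(C')$ and $\ker\partial\cong\bigoplus_{p+q=n+1}\Tor_1^R(H^p(C),H^q(C'))$; one checks the resulting injection is the external (cross) product. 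This gives the exact sequence of the statement.

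The splitting is the only genuinely delicate point, and I would follow \cite[Thm.\ 3.6.3]{weibel}: choosing, for each $p$, a module-level splitting of $0\to Z_p\to C_p\xrightarrow{\,d\,}d(C_p)\to 0$ and transporting the induced graded decompositions $C_p\cong Z_p\oplus(\text{complement})$ through the argument above produces a (non-canonical) retraction of the cross product. Such splittings exist whenever the coboundary submodules $d(C_p)$ are projective --- in particular over $\IZ$ with the $C_p$ free, which covers every cellular cochain complex arising later in this paper --- and flatness of $d(C_p)$ suffices in general by the cited result. I expect this to be the main obstacle; everything else is routine homological bookkeeping, with the flatness hypotheses spent exactly in the flatness of the $Z_p$ and in the exactness of $0\to Z\otimes_R C'\to C\otimes_R C'\to\widetilde B\otimes_R C'\to 0$.
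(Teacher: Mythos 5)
The paper does not actually prove this lemma; it cites \cite[Thm.~3.6.3]{weibel} and moves on. So there is no ``paper's proof'' to match you against, and what you have written is essentially the standard argument behind Weibel's theorem. Your bookkeeping is all in order: flatness of $Z_p$ from the two-out-of-three property applied to $0\to Z_p\to C_p\to d(C_p)\to 0$; exactness of $0\to Z\otimes_R C'\to C\otimes_R C'\to\widetilde B\otimes_R C'\to 0$ because $\widetilde B$ is degreewise flat; identification of the connecting map with $\bigoplus\iota_p\otimes\id$; and the extraction of $\coker\partial$ and $\ker\partial$ from the Tor sequence of $0\to B_p\to Z_p\to H^p(C)\to 0$, using that $Z_p$ is flat. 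The index shift from $\widetilde B_p=B_{p+1}$ lands $\ker\partial$ in total degree $n+1$, as required.

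The one place you overreach is the final sentence about the splitting. Flatness of $d(C_p)$ is exactly what is needed for the sequence to be \emph{exact}, but it is \emph{not} enough to make it \emph{split}, and the cited result in Weibel does not claim otherwise: the splitting requires the module-level short exact sequences $0\to Z_p\to C_p\to d(C_p)\to 0$ to split, which holds when $d(C_p)$ is projective but can fail when it is merely flat (a surjection of a free abelian group onto $\mathbb{Q}$ is a standard example of a non-split extension with flat quotient). So ``flatness of $d(C_p)$ suffices in general by the cited result'' should be deleted. Note, however, that this is a defect you have inherited from the paper itself: the lemma as stated asserts a \emph{split} exact sequence under flatness hypotheses alone, which is slightly stronger than what Weibel's theorem gives. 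Your observation that the cellular cochain complexes arising in the applications are free over $\mathbb{Z}$ --- so that the coboundary modules are projective and the splitting does hold there --- is exactly the right repair, and in fact only the exactness (never the splitting) is used in the paper.
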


	Finally we can introduce our main result.
	
	\begin{theorem}[Kunneth formula for Bredon cohomology of pullbacks]\label{maintheorem}Let $\Gamma$ be a group coming from a diagram \eqref{diagrampullback} and suppose that $(\calf,\calf_1,\calf_2)$ is a sequence of compatible families of subgroups of $\Gamma$. Let $\calN$ be a Green functor over $\OrGF{\calf}{K}$, $\calM$ be a $\pi_1^*\calN$-module over $\OrGF{\calf_1}{G}$ and $\calM'$ be a $\pi_2^*\calN$-module over $\OrGF{\calf_2}{H}$.
		Let $X$ be a $\calf_1$-CW-complex and $Y$ be a $\calf_2$-CW-complex. Then if we consider $X\times Y$ as a $(\calf_1\times_\calf\calf_2)$-CW-complex there is a split exact sequence
		\begin{align*}0\to \bigoplus\limits_{p+q=n}H^p_G(X;\calM)\otimes&_{H^0(K;\calN)}H^q_H(Y;\calM')\to H^n_\Gamma(X\times Y;\calM\otimes_{\calN}\calM')\\\to& \bigoplus\limits_{p+q=n+1}\Tor^{H^0(K;\calN)}(H^p_G(X;\calM),H^q_H(Y;\calM')).\end{align*} 
	\end{theorem}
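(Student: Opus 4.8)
The plan is to reduce the statement to the purely algebraic Kunneth formula of Lemma~\ref{kunnethalg}, applied over the commutative ring $R=H^0(K;\calN)$, after identifying the Bredon cochain complex of $X\times Y$ with the $R$-linear tensor product of the Bredon cochain complexes of $X$ and $Y$. Explicitly, I would establish the chain of natural isomorphisms
\begin{align*}
C^*_\Gamma\big(X\times Y;\calM\otimes_\calN\calM'\big)
&\cong \Hom_{\OrGF{\Gamma}{\calf_1\times_\calf\calf_2}}\big((\underline{C}_*(X)\otimes\underline{C}_*(Y))|_\Gamma,\,\calM\otimes_\calN\calM'\big)\\
&\cong C^*_G(X;\calM)\otimes_{H^0(K;\calN)}C^*_H(Y;\calM'),
\end{align*}
where the first isomorphism comes from applying $\Hom_{\OrGF{\Gamma}{\calf_1\times_\calf\calf_2}}(-,\calM\otimes_\calN\calM')$ to the Eilenberg--Zilber equivalence of Proposition~\ref{eilenberg-zilber}, and the second is Theorem~\ref{eilenberg-zilber-coh}.

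First I would invoke Proposition~\ref{eilenberg-zilber} to replace $\underline{C}_*^{\,\Gamma}(X\times Y)$ by $(\underline{C}_*^{\,G}(X)\otimes\underline{C}_*^{\,H}(Y))|_\Gamma$ as chain complexes of $\OrGF{\Gamma}{\calf_1\times_\calf\calf_2}$-modules; this uses the hypothesis that $X\times Y$ is equipped with its product $(\calf_1\times_\calf\calf_2)$-CW structure, and that the product cell stabilizers lie in $\calf_1\times_\calf\calf_2$, which is guaranteed by compatibility of $(\calf,\calf_1,\calf_2)$. Dualizing and applying Theorem~\ref{eilenberg-zilber-coh} degreewise yields the tensor-product description. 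The one point requiring care is the $H^0(K;\calN)$-module structure on the two sides: it is induced by the Green functor $\calN$ through the factorizations $H^0(K;\calN)\to\calN(K/\pi_1(P))\to\calM(G/P)$ (and similarly for $\calM'$) recorded in the proof of Theorem~\ref{eilenberg-zilber-coh}, and it is exactly these factorizations that make the cellwise tensor products over $H^0(K;\calN)$ agree with those over $\calN(K/\pi_1(P))$ appearing in the definition of $\calM\otimes_\calN\calM'$.

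It then remains to feed $C=C^*_G(X;\calM)$ and $C'=C^*_H(Y;\calM')$ into Lemma~\ref{kunnethalg} over $R=H^0(K;\calN)$: this produces the split short exact sequence with $H^n(C\otimes_R C')\cong H^n_\Gamma(X\times Y;\calM\otimes_\calN\calM')$, $H^p(C)\cong H^p_G(X;\calM)$ and $H^q(C')\cong H^q_H(Y;\calM')$ by the identifications above, and the splitting transports since it is natural in $C$ and $C'$. The step I expect to be the real obstacle is checking the flatness hypothesis of Lemma~\ref{kunnethalg}: one needs each cochain group $C^n_G(X;\calM)$ and each coboundary submodule $\delta\big(C^n_G(X;\calM)\big)\subseteq C^{n+1}_G(X;\calM)$ to be flat over $H^0(K;\calN)$ (by the form of the algebraic lemma, only the complex of $X$ need satisfy this). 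Since $\underline{C}_n^{\,G}(X)$ is a direct sum of representable free Bredon modules indexed by the $n$-cells, $C^n_G(X;\calM)$ is assembled from the values $\calM(G/P_\lambda)$, so flatness of the cochain groups reduces to flatness of these values over $H^0(K;\calN)$; flatness of the coboundaries is the remaining condition. Both hold in the situations relevant to the applications --- for the constant Green functor $H^0(K;\calN)$ is just the ground ring, and for the representation Green functors they are verified in the later sections --- but they are precisely the hypotheses that bound the scope of the method. Granting them, Lemma~\ref{kunnethalg} delivers the asserted sequence.
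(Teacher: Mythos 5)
Your argument is the same one the paper gives: pass from $\underline{C}_*^{\,\Gamma}(X\times Y)$ to $(\underline{C}_*^{\,G}(X)\otimes\underline{C}_*^{\,H}(Y))|_\Gamma$ via Proposition~\ref{eilenberg-zilber}, identify the resulting Bredon cochain complex with $C^*_G(X;\calM)\otimes_{H^0(K;\calN)}C^*_H(Y;\calM')$ by Theorem~\ref{eilenberg-zilber-coh}, and then invoke Lemma~\ref{kunnethalg} over $R=H^0(K;\calN)$. Your remark about flatness is a legitimate point the paper passes over: Lemma~\ref{kunnethalg} requires that (for at least one of the two factors) the cochain groups and the coboundary images be $H^0(K;\calN)$-flat, and that hypothesis is not stated in Theorem~\ref{maintheorem}; as written, the theorem implicitly relies on such a flatness condition, which in the applications is guaranteed by the specific shape of the complexes, so a clean formulation would make that assumption explicit.
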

    \begin{proof}
    	It is a immediate consequence of Theorems \ref{eilenberg-zilber}, \ref{eilenberg-zilber-coh} and Lemma \ref{kunnethalg}.
    	\end{proof}

 	\section{Examples and applications}
 	In this section we apply Theorem \ref{maintheorem} in different cases. In particular, we compute Bredon cohomology with coefficients in representations of the classifying space for proper actions for important examples studied in \cite{vafa-witten}. As a consequence we obtain a computation of twisted orbifold K-theory ${ }^\alpha K^*(T^6/(\IZ/2\IZ)^2)$ for some discrete twisting $\alpha$. 	
\begin{remark}Theorem \ref{maintheorem} has the following consequences:
 \begin{enumerate}
 \item Consider the direct product of two discrete groups $G\times H$ as a pullback over the trivial group acting over a $(G\times H)$-CW-complex. Applying Theorem \ref{maintheorem} we obtain the well known Kunneth formula for direct product of groups. In this case the tensor product is taken over $\IZ$.
 \item Consider the following pullback diagram over a finite group $G$
 \begin{equation*}\label{pullbackrepG}
 		\xymatrix{
 			G \ar[r]^{id}\ar[d]_{id} & G\ar[d]_{id}\\
 			G \ar[r]^{id} &  G}\end{equation*}
 If we apply Theorem \ref{maintheorem} to the above pullback we obtain a Kunneth formula for Bredon cohomology of the product of two $G$-spaces with the diagonal $G$-action. In this case the tensor product is taken over $\calN(G)$.
 \end{enumerate}
\end{remark} 
 	On the other hand if we consider the trivial family and the constant Bredon module $\underbar{R}$ (where $R$ is a commutative ring with unity) with the $G$-space $EG$ and the $H$-space $EH$ we obtain the following Kunneth formula for group cohomology of pullbacks
 	\begin{theorem}[Kunneth formula for group cohomology of pullbacks] Let $R$ be a PID. There is a split exact sequence
    \begin{align*}0\to \bigoplus\limits_{p+q=n}H^p(G;\underbar{R})\otimes_{H^0(K;\underline{R})}&H^q(H;\underline{R})\to H^n(\Gamma;\underline{R})\to\\& \bigoplus\limits_{p+q=n+1}\Tor^{H^0(K;\underline{R})}(H^p(G;\underbar{R}),H^q(H;\underbar{R})).\end{align*}
 	\end{theorem}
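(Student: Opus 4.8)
The plan is to derive this as a special case of the main Kunneth formula (Theorem~\ref{maintheorem}). First I would set up the data: take $\calf$, $\calf_1$, $\calf_2$ all to be the trivial family $\trivial$ consisting only of the trivial subgroup of $K$, $G$, $H$ respectively; this is trivially a sequence of compatible families since the image of the trivial subgroup is the trivial subgroup. For the coefficient systems, take $\calN=\underline{R}$ over $\OrGF{K}{\trivial}$, which is a Green functor by the first Example in Section~1, and take $\calM=\underline{R}$ over $G$ and $\calM'=\underline{R}$ over $H$; the module structures $\pi_1^*\underline{R}\times\underline{R}\to\underline{R}$ and $\pi_2^*\underline{R}\times\underline{R}\to\underline{R}$ are induced by the ring multiplication on $R$, so the hypotheses of Theorem~\ref{maintheorem} are met. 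For the spaces, take $X=EG$ and $Y=EH$, the classifying spaces for the trivial family (that is, contractible free $G$- and $H$-CW-complexes). Then $X\times Y=EG\times EH$ is a free $\Gamma$-CW-complex, hence a $(\trivial\times_{\trivial}\trivial)$-CW-complex, and is contractible, so it serves as a model for $E\Gamma$.

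Next I would identify the relevant objects. For the trivial family, the orbit category $\OrGF{G}{\trivial}$ has a single object $G/\trivial$, and Bredon cohomology $H^*_G(EG;\underline{R})$ reduces to ordinary group cohomology $H^*(G;R)=H^*(G;\underline{R})$; similarly for $H$, $K$, and $\Gamma$. In particular $H^n_\Gamma(EG\times EH;\underline{R}\otimes_{\underline{R}}\underline{R})=H^n(\Gamma;\underline{R})$, using that $\underline{R}\otimes_{\underline{R}}\underline{R}\cong\underline{R}$ as a Bredon module over $\Gamma$ (the tensor-product formula gives $R\otimes_R R\cong R$ on the single object). Also $H^0(K;\underline{R})\cong\colim_{K/L}\underline{R}(K/L)=R$, so the ground ring over which we tensor is just $R$ itself.

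Then I would invoke Theorem~\ref{maintheorem} verbatim with this data: it produces the split short exact sequence
\begin{align*}
0\to \bigoplus_{p+q=n}H^p_G(EG;\underline{R})\otimes_{H^0(K;\underline{R})}H^q_H(EH;\underline{R})&\to H^n_\Gamma(EG\times EH;\underline{R}\otimes_{\underline{R}}\underline{R})\\
&\to \bigoplus_{p+q=n+1}\Tor^{H^0(K;\underline{R})}(H^p_G(EG;\underline{R}),H^q_H(EH;\underline{R})).
\end{align*}
Substituting the identifications above ($H^*_G(EG;\underline{R})=H^*(G;\underline{R})$, etc., and $H^0(K;\underline{R})=R$) yields exactly the claimed sequence. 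The only genuine point needing care, and the one I would flag as the main obstacle, is verifying the flatness hypothesis of Lemma~\ref{kunnethalg} that is implicit in applying Theorem~\ref{maintheorem}: the algebraic Kunneth lemma requires the cochain complex $C^*_G(EG;\underline{R})$ and its coboundary images to be flat over $R$. This is why the hypothesis "$R$ is a PID" appears: one chooses a model of $EG$ that is free as a $G$-CW-complex, so the Bredon cochain groups are free (hence flat) $R$-modules, and over a PID submodules of free modules are free, making the coboundary images flat as well. With that observation the proof is just the specialization described above.
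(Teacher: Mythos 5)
Your proposal is correct and takes essentially the same route as the paper: the paper's own proof simply observes that $EG\times EH$ is a model for $E\Gamma$ and applies Theorem~\ref{maintheorem} with the trivial family and the constant module $\underline{R}$, which is precisely your specialization. Your added remark that the PID hypothesis is what supplies the flatness needed for Lemma~\ref{kunnethalg} (via torsion-freeness of the cochain groups and their coboundary images) is a useful clarification that the paper leaves implicit.
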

 	\begin{proof}
 	Notice that the $\Gamma$-space $EG\times EH$ is a model for $E\Gamma$. Applying Theorem \ref{maintheorem} to the $\Gamma$-CW-complex $EG\times EH$ we obtain the result.
 		\end{proof}
 Consider the Bredon modules $\calr$ and $\calr_\alpha$ defined on the family of finite subgroups. Applying Theorems \ref{pullbackrepringiso} and \ref{maintheorem} we obtain the following theorem.	
 	\begin{theorem}[Kunneth formula for Bredon cohomology of proper actions with coefficients in representations]\label{kunnethformula}Let $X$ be a proper $G$-CW-complex, let $Y$ be a proper $H$-CW-complex. Let $\alpha\in Z^2(G;\IZ/n\IZ)$ be a normalized torsion cocycle of $G$.  There is a split exact sequence
 		\begin{align*}0\to \bigoplus\limits_{p+q=n}H^p_G(X;\mathcal{R}_\alpha)\otimes&_{H^0(K;\mathcal{R})}H^q_H(Y;\mathcal{R})\to H^n_\Gamma(X\times Y;\mathcal{R}_{p_1^*(\alpha)})\to\\& \bigoplus\limits_{p+q=n+1}\Tor^{H^0(K;\mathcal{R})}(H^p_G(X;\mathcal{R}_\alpha),H^q_H(Y;\mathcal{R})).\end{align*}When the group $K$ is finite one can substitute $H^0(K; \mathcal{R})$ in the above formula by the representation ring $R(K)$.
 	\end{theorem}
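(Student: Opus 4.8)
The plan is to derive Theorem \ref{kunnethformula} as a specialization of Theorem \ref{maintheorem}. I would apply that theorem with $\calN=\calr^K$, the representation Green functor of Example \ref{repfunctor}; with $\calM=\calr_\alpha$ over $\OrGF{G}{\calfin_G}$ and $\calM'=\calr^H$ over $\OrGF{H}{\calfin_H}$; and with the triple of families $(\calfin_K,\calfin_G,\calfin_H)$. Three hypotheses must then be checked. First, $(\calfin_K,\calfin_G,\calfin_H)$ is a sequence of compatible families, because a homomorphic image of a finite group is finite, so $\pi_1(\calfin_G)\subseteq\calfin_K$ and $\pi_2(\calfin_H)\subseteq\calfin_K$. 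Second, $\calr^H$ is a $\pi_2^*\calr^K$-module (this is Example \ref{PullbackExampleBredon}) and $\calr_\alpha$ is a $\pi_1^*\calr^K$-module, the pairing $R(\pi_1(P))\otimes R_\alpha(P)\to R_\alpha(P)$ being "restrict the $\pi_1(P)$-representation along $\pi_1|_P$, then tensor with the given $\alpha$-twisted representation"; the output is again $\alpha$-twisted because the external product pairs an honest ($0$-twisted) representation with an $\alpha$-twisted one, and naturality with respect to the restriction and conjugation maps of the orbit category is routine.

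The third and only non-formal point is to identify the tensor-product Bredon module appearing on the right-hand side of Theorem \ref{maintheorem}: one must exhibit $\calr_\alpha\otimes_{\calr^K}\calr^H\cong\calr_{p_1^*(\alpha)}$ as Bredon modules over $\OrGF{\Gamma}{\calfin_G\times_{\calfin_K}\calfin_H}$. On each object $\Gamma/(P\times_{\pi_1(P)}Q)$ this is exactly the corollary to Theorem \ref{pullbackrepringiso} applied to the finite pullback group $P\times_{\pi_1(P)}Q$, giving $R_\alpha(P)\otimes_{R(\pi_1(P))}R(Q)\cong R_{p_1^*(\alpha)}(P\times_{\pi_1(P)}Q)$; the remaining work is to check that these objectwise isomorphisms assemble into a natural transformation, i.e. that they commute with the structure maps of the orbit category. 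This is the twisted version of the argument already carried out for $\calr^G\otimes_{\calr^K}\calr^H\cong\calr^\Gamma$ and goes through verbatim, since pullbacks commute with restrictions on (twisted) representation rings. I would also note that a cell $e\times f$ of $X\times Y$ has $\Gamma$-stabilizer $G_e\times_K H_f$, which is finite, so $X\times Y$ is a $(\calfin_G\times_{\calfin_K}\calfin_H)$-CW-complex, and its Bredon cohomology over this family coincides with $H^*_\Gamma(X\times Y;-)$ taken over $\calfin_\Gamma$, as only the values of the coefficient module on actual cell stabilizers enter the cochain complex.

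Granting these verifications, Theorem \ref{maintheorem} produces the asserted split exact sequence, with tensor and $\Tor$ taken over $H^0(K;\calr)=H^0(K;\calN)$. For the final sentence, when $K$ is finite the object $K/K$ is terminal in $\OrGF{K}{\calfin}$, since there is a unique morphism $K/L\to K/K$ for every finite $L\leq K$; hence the inverse-limit description of $H^0$ gives $H^0(K;\calr)\cong\calr(K/K)=R(K)$ as rings, compatibly with all the module structures in sight, so $H^0(K;\calr)$ may be replaced by $R(K)$ throughout. The main obstacle, modest as it is, lies entirely in the naturality check of the second paragraph: the objectwise isomorphism comes for free from the corollary to Theorem \ref{pullbackrepringiso}, but one must still verify it commutes with the orbit-category structure maps; the compatibility of the families, the module structures, and the final invocation of Theorem \ref{maintheorem} (together with the flatness needed to feed Lemma \ref{kunnethalg}, where the relevant cellular Bredon cochain complexes are finite direct sums of twisted representation rings of finite groups) are routine bookkeeping.
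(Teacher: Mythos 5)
Your proposal follows the same route as the paper, which simply asserts that the result follows by applying Theorems \ref{pullbackrepringiso} and \ref{maintheorem} with $\calN=\calr^K$, $\calM=\calr_\alpha$, $\calM'=\calr^H$ and the families of finite subgroups; you have correctly spelled out the compatibility of families, the module structures, the objectwise identification $\calr_\alpha\otimes_{\calr^K}\calr^H\cong\calr_{p_1^*(\alpha)}$ via the corollary to Theorem \ref{pullbackrepringiso}, the naturality check, and the $K$-finite reduction $H^0(K;\calr)\cong R(K)$ via terminality of $K/K$. Note only that you (like the paper) invoke Lemma \ref{kunnethalg} without fully establishing that the cochain groups and their coboundary images are flat over $H^0(K;\calr)$; this hypothesis is not automatic from the cochains being finite sums of (twisted) representation rings, and in the paper's applications it is verified case by case rather than in the theorem's generality.
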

 \subsection{The Vafa-Witten groups}
 Here we apply the above results to compute the Bredon cohomology with coefficients in twisted representations for the classifying space for proper actions of the Vafa-Witten groups defined in \cite{vafa-witten}. Using that computation and the Atiyah Hirzebruch spectral sequence we obtain some computations of twisted and untwisted orbifold K-theory groups  associated to the action of these groups on $\IR^6$.
 
  From now on, the cyclic group $\IZ/4\IZ$ will be seen as the set of $4$-th roots of unity generated by $\omega = e^{\frac{2\pi i}{4}}$.
\subsubsection{The group $\IZ^6\rtimes\IZ/4\IZ$}
Consider the action of $\IZ/4\IZ$ on $\IZ^{6}$ induced from the 
action of $\IZ/4\IZ$ on $\IC^3$, given by
$$
k(z_1,z_2,z_3)=(-z_1,iz_2,iz_3).
$$
We denote the $\IZ^6$ with the above $\IZ/4\IZ$-action by $M$. Now we take the semidirect product 
\begin{align*}
1\to M \to M\rtimes\IZ/4\IZ \to \IZ/4\IZ\to 1.
\end{align*}

Note that we have the following decomposition of $\IZ/4\IZ$-modules
$$M=(M_1)^2\oplus (M_2)^2,$$
where $M_1$ has rank one and the generator acts by multiplication by $-1$; $M_2$ has rank two and the generator acts by multiplication by $i$.

This decomposition induces  a decomposition of the group $M\rtimes\IZ/4\IZ$ as the multiple pullback

\begin{equation}\label{multiplepullback}
(M_1\rtimes\IZ/4\IZ)\times_{\IZ/4\IZ}(M_1\rtimes\IZ/4\IZ)\times_{\IZ/4\IZ}(M_2\rtimes\IZ/4\IZ)\times_{\IZ/4\IZ}(M_2\rtimes\IZ/4\IZ).
\end{equation}
\begin{remark}
	If we divide $\IC^3$ by $M$, we have an action of $\IZ/4\IZ$ over $T^6$. Let $Y_1=T^6/(\IZ/4\IZ)$ the associated orbifold.
\end{remark}
For simplicity we denote $M_1\rtimes\IZ/4\IZ$ by $G$ and $M_2\rtimes\IZ/4\IZ$ by $H$.

Consider $\IR$ with the $G$-action defined as follows. Let $(n,\omega)\in G$ and $x\in\IR$
$$(n,\omega)\cdot x=n+\omega^2x.$$

 A $G$-CW-complex decomposition for $\IR$ is given by
 \begin{center}
 	\begin{tabular}{|ccccc|}	\hline
 		&&0-cells&& \\ \hline
 		&&&&\\
 		$v_1$ & $O$ & $(0,\bar{1})$ & $\IZ/4\IZ$&\\
 		$v_2$ & $P$ & $ (1,\bar{1})$& $\IZ/4\IZ$&\\
 		\hline 		
 		&&1-cell&&\\ \hline
 		&&&&\\
		$e_1$ & $OP$ & $(0,\bar{2})$ & $\IZ/2\IZ$&\\ 		
 		\hline 		
 	\end{tabular}
 \end{center}
 
 The first column is an enumeration of equivalence classes of cells; the second lists
 a representative of each class; the third column gives a generating element for the
 stabilizer of the given representative; and the last one is the isomorphism type of the stabilizer. The inclusions $i_j:stab(e_1)\to stab(v_i)$ ($i=1,2$) are given by multiplication by 2.
  \[\xy 
  (-40,0)*+{\cdots};
  (40,0)*+{\cdots};
  (-10,-5)*+{0};(-10,0)*+{|};
  (10,-5)*+{\frac{1}{2}};(10,0)*+{|};
  (-10,0)*+{\bullet}; (10,0)*+{\bullet};
  (-10,20)*+{\bullet}; (10,20)*+{\bullet};
  (-10,25)*+{O}; (10,25)*+{P};
  \ar @{<-} (-35,0);(35,0);
  \ar @{=} (-10,0);(10,0);       
  \ar @{=}^{OP} (-10,10);(10,10)        
  \endxy\] 
 Recall that $R(\IZ/4\IZ)\cong \IZ[\zeta]/\langle\zeta^4-1\rangle$. We use the following notations
 \begin{enumerate}
 	\item $I=\langle\zeta^2-1\rangle$,
 	\item $J=\langle\zeta^2+1\rangle$,
 	\item $K=\langle \zeta +1\rangle$,
 	\item $L=\langle\zeta-1\rangle$.
 \end{enumerate}  
 The Bredon cochain complex of $\IR$ with coefficients in $\calr$ has the form
 $$0\to \calr(G/stab(v_1))\oplus \calr(G/stab(v_2))\xrightarrow{\delta} \calr(G/stab(e_1))\to 0.$$
 The map $\delta$ is $i_1^*-i_2^*$. Note that $\delta$ is surjective therefore
 $$H^1_G(\IR;\calr)=0.$$
 On the other hand $$H^0_G(\IR;\calr)=\ker(\delta).$$
 
 Note that the Bredon cochain complex is isomorphic to
 $$0\to R(\IZ/4\IZ)^{2}\xrightarrow{\delta} R(\IZ/4\IZ)/I\to 0$$

 
 On the above complex one can see that $\ker(\delta)$ is isomorphic as a $R(\IZ/4\IZ)$-module to $R(\IZ/4\IZ)\oplus I$. Note that this module is projective because $I\oplus J=2R(\IZ/4\IZ)\cong  R(\IZ/4\IZ)$.
 
 Now let us consider the group $H$. Let $(n,m,\omega)\in H$ and $z\in\IC$, define a $H$-action on $\IC$ as follows
 $$(n,m,\omega)\cdot z=(n+im)+\omega z.$$

 A $H$-CW-complex decomposition for $\IC$ is given by
 
 \begin{center}
 	\begin{tabular}{|cccc|}
 		\hline
 		&&0-cells  &   \\
 		\hline &&&\\
 		$a_1$ & $Q$ & $(0,0,\bar{1})$ & $\IZ/4\IZ$ \\ 		
 		$a_2$ & $R$ & $ (-1,0,\bar{3})$& $\IZ/2\IZ$\\
 		$a_3$ & $S$ & $(0,1,\bar{1})$& $\IZ/4\IZ$\\
 		\hline
 		&&1-cells &\\
 		\hline &&&\\
 		$b_1$ & $QR$ & $(0,0,\bar{0})$ & $0$\\
 		$b_2$ & $RS$ & $(0,0,\bar{0})$ & $0$\\
 		\hline
 		&& 2-cell &\\ \hline &&&\\
 		$T$ & $QRSR'$& $(0,0,\bar{0})$ & $0$\\\hline
 	\end{tabular}
 	
 \end{center}
 With similar conventions to the case of the action of $G$ over $\IR$. 
 \[\xy 
 (0,0)*+{\bullet}; (30,0)*+{\bullet}; (0,30)*+{\bullet}; (30,30)*+{\bullet};
 (0,0)*+{\bullet}; (30,-5)*+{\frac{1}{2}}; (-5,30)*+{\frac{1}{2}}; (15,-5)*+{a};(35,15)*+{b};
 \ar @{=} (0,0);(30,0);       
 \ar @{=} (0,0);(0,30); 
 \ar @{~} (0,30);(30,30);             
 \ar @{~} (30,0);(30,30);       
 \ar @{<-} (-20,0);(40,0);       
 \ar @{<-} (0,-20);(0,40);
 \endxy\] 
 
 In this case the boundary is given by $\partial T=0$, $\partial b_1=a_2-a_1$ and $\partial b_2=a_3-a_2$.
 We denote $i_{l,m}:stab(b_m)\to stab(a_l)$ the inclusion.
 
 The Bredon cochain complex with coefficients in representations has the form
 \begin{align*}
 0\to \calr&(H/stab(a_1))\oplus \calr(H/stab(a_2))\oplus \calr(H/stab(a_3))\\&\xrightarrow{\epsilon} \calr(H/stab(b_1))\oplus \calr(H/stab(b_2))\xrightarrow{0} \calr(H/stab(T))\to0.\end{align*}
 Here, the map  $\epsilon$ is defined as follows. Let $\rho_i\in\calr(H/stab(a_i))$, then
 $$\epsilon(\rho_1,\rho_2,\rho_3)=\left(i_{1,1}^*(\rho_1)-i_{3,1}^*(\rho_3), i_{3,2}^*(\rho_3)-i_{2,2}^*(\rho_2)\right).$$
 
 The map $\epsilon$ is surjective, then $H^1_H(\IC;\calr)=0$. On the other hand $H^0_H(\IC;\calr)=\ker{\epsilon}$. To determine $\ker(\delta)$ note that the map $\epsilon$ can be identified with a map
 \begin{align*}R(\IZ/4\IZ)^{2} \oplus R(\IZ/4\IZ)/&I\to \left(R(\IZ/4\IZ)/J\right)^{2}\end{align*}
 of $R(\IZ/4\IZ)$-modules mapping
 \begin{align*}
 (\zeta,0,0)&\mapsto (1,0),\\
 (0,\zeta,0)&\mapsto (0,-1),\\
 (0,0,\zeta)&\mapsto (-1,1).\\
 \end{align*} 

 
 The kernel of the above map is isomorphic as an $R(\IZ/4\IZ)$-module to $$L\oplus L/I\oplus R(\IZ/4\IZ).$$
 Note that the above $R(\IZ/4\IZ)$-module is projective because we have the following isomorphisms of $R(\IZ/4\IZ)$-modules 
 \begin{align*}L/I\oplus K/I\cong R(\IZ/4\IZ)/I, \text{\quad and}\\R(\IZ/4\IZ)/I\oplus R(\IZ/4\IZ)/J\cong R(\IZ/4\IZ).\end{align*}
Then we have proved the following result.
\begin{theorem}\label{partes}Let $H$ and $G$ be the groups defined above. There is an isomorphism of $R(\IZ/4\IZ)/I$-modules
	\begin{itemize}
	  \item 
	  	$H^n_G(\IR;\calr)\cong \begin{cases}
	  	R(\IZ/4\IZ)\oplus I&\text{ for $n=0$}\\
	  	0&\text{ for $n>0$}.
	  \end{cases}$
	  \item $H^n_H(\IC;\calr)\cong \begin{cases}
	  L\oplus (L/I)\oplus R(\IZ/4\IZ) &\text{for $n=0$}\\
	  \IZ &\text{for $n=2$}\\
	  0&\text{for $n\neq0,2$}.
	  \end{cases}$
	\end{itemize}
	Moreover all modules are projective.
\end{theorem}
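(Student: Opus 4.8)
The plan is to read off both computations directly from the finite Bredon cochain complexes attached to the cell structures exhibited above, to identify the coboundary maps explicitly as homomorphisms of $R(\IZ/4\IZ)$-modules, and then to recognise the surviving cohomology as a concrete projective $R(\IZ/4\IZ)$-module, using the ideal identities for $R(\IZ/4\IZ)=\IZ[\zeta]/\langle\zeta^4-1\rangle$ stated above.

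First I would treat the action of $G$ on $\IR$. The displayed $G$-CW-structure has two $0$-cells $v_1,v_2$ of stabilizer $\IZ/4\IZ$ and one $1$-cell $e_1$ of stabilizer $\IZ/2\IZ$, each inclusion of $stab(e_1)$ into $stab(v_i)$ being multiplication by $2$; hence the Bredon cochain complex with coefficients in $\calr$ is the two term complex
$$0\to\calr(G/stab(v_1))\oplus\calr(G/stab(v_2))\xrightarrow{\ \delta\ }\calr(G/stab(e_1))\to0$$
with $\delta=i_1^*-i_2^*$. I would first record the identification $R(\IZ/2\IZ)\cong R(\IZ/4\IZ)/I$ coming from restriction along the unique subgroup of order two in $\IZ/4\IZ$: the generator $\zeta$ restricts to the sign representation, whose square is trivial, so the kernel of the restriction is exactly $I=\langle\zeta^2-1\rangle$. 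Under this identification $\delta$ is the difference map $R(\IZ/4\IZ)^2\to R(\IZ/4\IZ)/I$, which is visibly surjective, so $H^1_G(\IR;\calr)=0$; writing $(\rho_1,\rho_2)\mapsto(\rho_2,\rho_1-\rho_2)$ then identifies $\ker\delta$ with $R(\IZ/4\IZ)\oplus I$ as $R(\IZ/4\IZ)$-modules, and projectivity follows from the splitting $I\oplus J\cong R(\IZ/4\IZ)$ recorded above, which exhibits $I$ as a direct summand of a free module.

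Next I would treat the action of $H$ on $\IC$. The displayed $H$-CW-structure has three $0$-cells $a_1,a_2,a_3$ of stabilizers $\IZ/4\IZ$, $\IZ/2\IZ$, $\IZ/4\IZ$, two $1$-cells $b_1,b_2$, and one $2$-cell $T$. Since the cellular boundary of $T$ vanishes, the top coboundary of the Bredon cochain complex is zero, so $H^2_H(\IC;\calr)\cong\calr(H/stab(T))=\IZ$ at once. I would then identify the $0\to1$ coboundary $\epsilon$, via the restriction maps along the edge stabilizers, with the explicit $R(\IZ/4\IZ)$-linear map between finitely generated modules displayed above, check that it is surjective — giving $H^1_H(\IC;\calr)=0$ — and compute its kernel, obtaining $L\oplus(L/I)\oplus R(\IZ/4\IZ)$. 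Projectivity of this module follows from the identities $L/I\oplus K/I\cong R(\IZ/4\IZ)/I$ and $R(\IZ/4\IZ)/I\oplus R(\IZ/4\IZ)/J\cong R(\IZ/4\IZ)$ recorded above. Assembling the two cases yields the stated isomorphisms, all modules projective.

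I expect the main obstacle to be the two kernel computations \emph{as $R(\IZ/4\IZ)$-modules} — not merely as abelian groups — together with the projectivity verification. Both come down to arithmetic inside $R(\IZ/4\IZ)=\IZ[\zeta]/\langle\zeta^4-1\rangle$: one must keep careful track of how $R(\IZ/4\IZ)$ acts on each term of the cochain complex through the various restriction and projection maps, and one must produce the explicit splittings of the ideals $I,J,K,L$ quoted above. Everything else — writing down the two finite complexes from the cell data, checking surjectivity of $\delta$ and of $\epsilon$, and thereby the vanishing of $H^1$ and the value of $H^2$ — is then routine.
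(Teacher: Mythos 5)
Your plan reproduces the paper's proof essentially step for step: write down the two finite Bredon cochain complexes from the displayed $G$- and $H$-CW-structures, identify the coboundaries $\delta$ and $\epsilon$ as explicit $R(\IZ/4\IZ)$-linear maps using $R(\IZ/2\IZ)\cong R(\IZ/4\IZ)/I$, check surjectivity to kill $H^1$ and read off $H^2=\IZ$ from the vanishing top coboundary, compute the kernels, and then deduce projectivity from the quoted ideal identities. Everything up to and including the kernel identifications matches the paper and is fine.

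The projectivity step, however, does not follow from the argument you (and the paper) give, and I believe it is in fact false. The stated identity $I\oplus J\cong R(\IZ/4\IZ)$ fails, and $I=\langle\zeta^2-1\rangle$ is not projective over $R(\IZ/4\IZ)=\IZ[\zeta]/\langle\zeta^4-1\rangle$. Since the annihilator of $\zeta^2-1$ is $J$, one has $I\cong R(\IZ/4\IZ)/J$ as $R(\IZ/4\IZ)$-modules. The group ring $\IZ[\IZ/4\IZ]$ has no idempotents other than $0$ and $1$, so its spectrum is connected and every finitely generated projective module has constant rank; but $R(\IZ/4\IZ)/J$ becomes $0$ after reduction at a maximal ideal above an odd prime containing $\zeta-1$, while it is one-dimensional at a maximal ideal containing $\zeta^2+1$. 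Equivalently, $I+J$ is the sublattice $\{a_0+a_1\zeta+a_2\zeta^2+a_3\zeta^3 : a_0\equiv a_2,\ a_1\equiv a_3\pmod 2\}$ of index $4$, not the index-$16$ sublattice $2R(\IZ/4\IZ)$, and $1\notin I+J$ blocks any $R(\IZ/4\IZ)$-linear splitting of $R(\IZ/4\IZ)\twoheadrightarrow R(\IZ/4\IZ)/J$. The same objection defeats $R(\IZ/4\IZ)/I\oplus R(\IZ/4\IZ)/J\cong R(\IZ/4\IZ)$, so the projectivity claim for $H^0_H(\IC;\calr)$ is also unsupported. This is not a cosmetic issue: the disappearance of the $\Tor$ terms in Theorem~\ref{vafa-witten-ex} rests entirely on the projectivity you are asked to verify here, so a genuinely different argument (or a weaker conclusion) is needed at this point.
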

Since all the above modules are projective, in the corresponding Kunneth formula the term with $\Tor$ is zero and then we have a complete calculation of the Bredon cohomology groups of $\underbar{E}\Gamma$ with coefficients in representations.

\begin{theorem}\label{vafa-witten-ex}
	Let $H$ and $G$ be the groups in Theorem \ref{partes}. There is an isomorphism of $\IZ$-graded, $R(\IZ/4\IZ)$-modules
	\begin{align*}H^*_{M\rtimes\IZ/4\IZ}(\IC^3;\calr)\cong H^*_G(\IR;\calr)&\otimes_{R(\IZ/4\IZ)}H^*_G( \IR;\calr)\otimes_{R(\IZ/4\IZ)}\\&H^*_H(\IC;\calr)\otimes_{R(\IZ/4\IZ)}H^*_H(\IC;\calr).\end{align*}
\end{theorem}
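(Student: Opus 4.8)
The plan is to realize $\IC^3$, as an $(M\rtimes\IZ/4\IZ)$-space, as an external product of the four spaces already analyzed, and then to run the K\"unneth formula of Theorem \ref{maintheorem} once for each factor of the multiple pullback \eqref{multiplepullback}. First I would use the decomposition $M=(M_1)^2\oplus(M_2)^2$ to identify, $(M\rtimes\IZ/4\IZ)$-equivariantly,
$$\IC^3\;\cong\;(M_1\otimes\IR)\times(M_1\otimes\IR)\times(M_2\otimes\IR)\times(M_2\otimes\IR)\;=\;\IR\times\IR\times\IC\times\IC,$$
where the first two factors carry the $G$-action on $\IR$ and the last two the $H$-action on $\IC$ described above, and where $M\rtimes\IZ/4\IZ$ acts as the subgroup of $G\times G\times H\times H$ consisting of tuples with common image in $\IZ/4\IZ$, i.e. precisely the multiple pullback \eqref{multiplepullback}. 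The $4$-fold product carries a natural $(G\times G\times H\times H)$-CW-structure (each $\IR$, resp. $\IC$, being a proper CW-complex with finitely many orbits of cells), and since $M\rtimes\IZ/4\IZ$ is a subgroup, \cite[Cor.~3.4.4]{maysid} supplies a compatible $(M\rtimes\IZ/4\IZ)$-CW-structure; the associated Eilenberg--Zilber identification is the iterate of Proposition \ref{eilenberg-zilber}. Finally, since $K=\IZ/4\IZ$ is finite, $H^0(\IZ/4\IZ;\calr)=R(\IZ/4\IZ)$.

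Next I would assemble the pullback in stages. Set $\Gamma_1=G$, $\Gamma_2=G\times_{\IZ/4\IZ}G$, $\Gamma_3=\Gamma_2\times_{\IZ/4\IZ}H$ and $\Gamma_4=\Gamma_3\times_{\IZ/4\IZ}H=M\rtimes\IZ/4\IZ$, noting that each stage is again a pullback over $\IZ/4\IZ$ because all the structure maps to $\IZ/4\IZ$ remain surjective; correspondingly set $Z_1=\IR$, $Z_2=\IR^2$, $Z_3=\IR^2\times\IC$ and $Z_4=\IR^2\times\IC^2=\IC^3$. At stage $k$ I would apply Theorem \ref{maintheorem} with $K=\IZ/4\IZ$, $\calN=\calr^{\IZ/4\IZ}$, $\calM=\calr^{\Gamma_k}$, $\calM'$ the representation functor of the $(k{+}1)$-st factor ($G$ if $k=1$, $H$ if $k=2,3$), $X=Z_k$ and $Y$ that factor. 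Here $\calr^{\Gamma_k}$ is a $\pi_1^*\calr^{\IZ/4\IZ}$-module (the iterated form of Example \ref{PullbackExampleBredon}; note that a finite subgroup of any $\Gamma_k$ maps isomorphically onto its image in $\IZ/4\IZ$, since the kernel of $\Gamma_k\to\IZ/4\IZ$ sits inside $\IZ^6$ and is torsion-free), and the coefficient module $\calr^{\Gamma_k}\otimes_{\calr^{\IZ/4\IZ}}\calr^{(\mathrm{factor})}$ is identified with $\calr^{\Gamma_{k+1}}$ by the Bredon-module analogue of Theorem \ref{pullbackrepringiso} (the natural equivalence $\calr^G\otimes_{\calr^K}\calr^H\cong\calr^\Gamma$). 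This yields, for $k=1,2,3$, a split short exact sequence connecting $H^*_{\Gamma_{k+1}}(Z_{k+1};\calr)$, the tensor product $H^*_{\Gamma_k}(Z_k;\calr)\otimes_{R(\IZ/4\IZ)}H^*(\mathrm{factor};\calr)$, and a $\Tor^{R(\IZ/4\IZ)}$ correction term.

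Then I would argue by induction on $k$ that $H^*_{\Gamma_k}(Z_k;\calr)$ is a projective $R(\IZ/4\IZ)$-module. The base case $k=1$ is the first bullet of Theorem \ref{partes}. For the step, a tensor product over the commutative ring $R(\IZ/4\IZ)$ of two projective modules is projective, and $H^*_H(\IC;\calr)$ is projective by the second bullet of Theorem \ref{partes}, so the K\"unneth sequence at stage $k$ identifies $H^*_{\Gamma_{k+1}}(Z_{k+1};\calr)$ with a tensor product of projectives once its $\Tor$-term is shown to vanish; and it does vanish, since projective modules are flat, so $\Tor^{R(\IZ/4\IZ)}\!\big(H^p_{\Gamma_k}(Z_k;\calr),H^q(\mathrm{factor};\calr)\big)=0$. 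Hence each K\"unneth sequence degenerates to an isomorphism $H^*_{\Gamma_{k+1}}(Z_{k+1};\calr)\cong H^*_{\Gamma_k}(Z_k;\calr)\otimes_{R(\IZ/4\IZ)}H^*(\mathrm{factor};\calr)$ of $\IZ$-graded $R(\IZ/4\IZ)$-modules, and composing the three isomorphisms for $k=1,2,3$ gives the asserted fourfold tensor decomposition of $H^*_{M\rtimes\IZ/4\IZ}(\IC^3;\calr)$.

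Most of this is bookkeeping; the one point that needs genuine care — and the reason the conclusion is an equality of modules rather than merely a split exact sequence — is the propagation of projectivity, hence flatness, through the iteration, for which Theorem \ref{partes} together with the stability of projectivity under $\otimes_{R(\IZ/4\IZ)}$ is exactly what annihilates every $\Tor$ term. Secondarily, because Theorem \ref{maintheorem} is stated for a single pullback, at each stage one must check that the triple of finite-subgroup families is compatible and that the nested Eilenberg--Zilber identifications are associative, but this is routine and follows from Proposition \ref{eilenberg-zilber} and the definitions.
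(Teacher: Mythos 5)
Your proposal is correct and takes essentially the same approach as the paper's one-sentence proof: apply the K\"unneth formula of Theorem \ref{kunnethformula} (iterated across the fourfold pullback) and use the projectivity from Theorem \ref{partes} to kill every $\Tor$ term. You have simply made explicit the stagewise bookkeeping — the nested pullbacks, the propagation of projectivity, and the identification of coefficient modules via $\calr^G\otimes_{\calr^K}\calr^H\cong\calr^\Gamma$ — that the paper's terse proof leaves implicit.
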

\begin{proof}Bredon cohomology of $\IR$ and $\IC$ are projective $R( \IZ/4\IZ)$-modules, thus the result follows by applying Theorem \ref{kunnethformula}.
	\end{proof}

	
Since the Bredon cohomology groups of $\IC^3$ are concentrated at even degree, the Atiyah-Hirzebruch spectral sequence of Theorem \ref{a-h-twisted}  collapses at level 2 and we have also a complete calculation of $(M\rtimes\IZ/4\IZ)$-equivariant K-theory of $\IC^3$, it is  the same as the orbifold K-theory of $Y_1$.
\begin{corollary}\label{y1}
	 There is an isomorphism of $\IZ$-graded, $R(\IZ/4\IZ)$-modules
	$$K_{orb}^*(T^6/(\IZ/4\IZ))=K_{M\rtimes\IZ/4\IZ}^*(\IC^3)\cong H^*_{M\rtimes\IZ/4\IZ}(\IC^3;\calr).$$
\end{corollary}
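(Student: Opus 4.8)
The plan is to chain together the spectral sequence of Theorem \ref{a-h-twisted} (in its untwisted form, $\alpha = 0$) with the Bredon cohomology computation already obtained in Theorem \ref{vafa-witten-ex}. First I would observe that, by Theorem \ref{vafa-witten-ex}, the Bredon cohomology groups $H^*_{M\rtimes\IZ/4\IZ}(\IC^3;\calr)$ are concentrated in even degrees: indeed $H^*_G(\IR;\calr)$ is concentrated in degree $0$ and $H^*_H(\IC;\calr)$ is concentrated in degrees $0$ and $2$, so every tensor factor lives in even degree, and since all the modules involved are projective over $R(\IZ/4\IZ)$ the iterated tensor product again lives only in even degrees (no $\Tor$ terms appear).

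Next I would feed this into the Atiyah--Hirzebruch type spectral sequence of Theorem \ref{a-h-twisted} applied to $X = \IC^3$ with the $(M\rtimes\IZ/4\IZ)$-action and trivial cocycle, so that ${}^0 K^*_{M\rtimes\IZ/4\IZ}(\IC^3) = K^*_{M\rtimes\IZ/4\IZ}(\IC^3)$. The $E_2^{p,q}$ page is $H^p_{M\rtimes\IZ/4\IZ}(\IC^3;\calr)$ for $q$ even and $0$ for $q$ odd. Since $H^p$ vanishes for $p$ odd by the previous paragraph, the entire $E_2$ page is concentrated in bidegrees $(p,q)$ with both $p$ and $q$ even; in particular all differentials $d_r$, which change the total degree parity or shift $p$ by $r$ and $q$ by $1-r$, land in or emanate from zero groups. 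Hence the spectral sequence collapses at $E_2$, giving a (non-canonically split, as a module) identification of $K^*_{M\rtimes\IZ/4\IZ}(\IC^3)$ with $\bigoplus_p H^p_{M\rtimes\IZ/4\IZ}(\IC^3;\calr)$ as $\IZ$-graded $R(\IZ/4\IZ)$-modules. Finally, the identification $K^*_{orb}(T^6/(\IZ/4\IZ)) = K^*_{M\rtimes\IZ/4\IZ}(\IC^3)$ follows by unwinding definitions: dividing $\IC^3$ by the lattice $M$ gives $T^6$ with a residual $\IZ/4\IZ$-action whose quotient orbifold is $Y_1$, and $(M\rtimes\IZ/4\IZ)$-equivariant K-theory of $\IC^3$ agrees with $\IZ/4\IZ$-equivariant K-theory of $T^6$, which is by definition the orbifold K-theory of $Y_1$.

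I do not anticipate a serious obstacle here: the real work was already done in establishing the projectivity of the Bredon cohomology modules (Theorem \ref{partes}) and the Künneth decomposition (Theorem \ref{vafa-witten-ex}). The only point requiring a little care is the collapse argument for the spectral sequence — one must note that because $E_2^{p,q}$ is supported in even $p$ and even $q$, every potential differential $d_r\colon E_r^{p,q}\to E_r^{p+r,q-r+1}$ has source or target with odd second coordinate (since $r$ and $1-r$ have opposite parities for $r\geq 2$) and hence is zero, so $E_2 = E_\infty$. After that, the extension problem is trivial as a statement about graded modules (the claim is only an isomorphism of $\IZ$-graded $R(\IZ/4\IZ)$-modules, not of rings), completing the proof.
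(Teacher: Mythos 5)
Your argument is the paper's: concentration of the Bredon cohomology in even degree gives an $E_2$-page of the spectral sequence of Theorem~\ref{a-h-twisted} supported only where both $p$ and $q$ are even, so all differentials vanish and $E_2 = E_\infty$.

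There is, however, one step you dismiss too quickly. The assertion that ``the extension problem is trivial as a statement about graded modules'' is not correct: a filtered module need not be isomorphic to its associated graded, even additively (compare $\IZ/4\IZ$ filtered by $\IZ/2\IZ \subseteq \IZ/4\IZ$). What makes the extension problem disappear here is that each $E_\infty^{p,q}$ is a free abelian group --- these are tensor products of the $R(\IZ/4\IZ)$-modules computed in Theorem~\ref{partes}, all of which are $\IZ$-free --- and a finite filtration whose graded pieces are free abelian always splits. This is precisely the point the paper makes explicit, citing Proposition 5.8 of \cite{luck-cherncharacters}, in the parallel corollary for $Y_2$; the same justification is tacitly relied on here and should be invoked. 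A minor remark on your parity argument for the vanishing differentials: your phrasing addresses only the second coordinate of the target, but for $r$ odd that coordinate $q-r+1$ is in fact even; what kills the target in that case is its first coordinate $p+r$, which is odd. The conclusion is right, but both coordinates need to be examined.
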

\begin{remark}\label{limited}
	For any non trivial torsion cocycle $\beta \in Z^2(\Gamma;\IZ/n\IZ)$ our method cannot to be used to compute $H^*_{M\rtimes\IZ/4\IZ}(\IC^3;\calr_\beta)$ because from calculations contained in \cite{adempan} we know
	$$H^3({M}_1\rtimes \IZ/4\IZ;\IZ)=0\text{ and }H^3({M}_2\rtimes \IZ/4\IZ;\IZ)=0.$$Then it is not possible to obtain a non-trivial 2-cocycle of $M\rtimes\IZ/4\IZ$ coming from non trivial 2-cocycles of $G$ or $H$.
\end{remark}
\subsubsection{The group $\IZ^6\rtimes(\IZ/2\IZ)^2$}
Consider the action of $(\IZ/2\IZ)^2$ on $\IC^3$, given on generators by
$$\sigma_1(z_1,z_2,z_3)=(-z_1,-z_2,z_3),\hspace{0.5cm}\sigma_2(z_1,z_2,z_3)=(-z_1,z_2,-z_3).$$
We denote $\IZ^6$ with the induced action of $(\IZ/2\IZ)^2$ by $N$. Using the above action we define the semidirect product $N\rtimes(\IZ/2\IZ)^2$.

Note that we have the following decomposition of $(\IZ/2\IZ)^2$-modules:
$$N=(N_1)^2\oplus(N_2)^2\oplus(N_3)^2,$$
where $N_1$ has rank one and $\sigma_i$, $i=1,2$, act by multiplication by $-1$; $N_2$ has rank one, $\sigma_1$ acts by multiplication by -1; $\sigma_2$ acts by multiplication by 1; and $N_3$ has rank one and $\sigma_1$ acts by multiplication by 1 and $\sigma_2$ acts by multiplication by -1. 

That decomposition gives us  a decomposition of the group $N\rtimes(\IZ/2\IZ)^2$ as the multiple pullback of six groups over $(\IZ/2\IZ)^2$: two copies of each $N_j\rtimes (\IZ/2\IZ)^2$, $j=1,2,3$. 
\begin{remark}
	If we take a quotient  by $\IZ^6$, we have an action of $(\IZ/2\IZ)^2$ over $T^6$. Let $Y_2=T^6/(\IZ/2\IZ)^2$ the associated orbifold.
\end{remark}
We will construct a nontrivial torsion cocycle $$\beta\in Z^2(N\rtimes(\IZ/2\IZ)^2;\IZ/2)$$coming from a cocycle of $N_1\rtimes(\IZ/2\IZ)^2$. Consider the non-trivial central extension
$$0\to \IZ/2\IZ\to D_8\to(\IZ/2\IZ)^2\to0.$$With associated 2-cocycle $\alpha\in Z^2(G;S^1)$. To the above extension one can associate a non trivial central extension
\begin{equation}\label{cocycleno}0\to \IZ/2\IZ\to N_1\rtimes D_8\to N_1\rtimes(\IZ/2\IZ)^2\to0.\end{equation}
We denote by $\beta_1\in Z^2(N_1\rtimes(\IZ/2\IZ)^2;\IZ/2\IZ)$ the cocycle associated to extension \eqref{cocycleno}, and by $\beta\in Z^2(N\rtimes(\IZ/2\IZ)^2;\IZ/2\IZ)$ to the pullback of $\beta_1$ on the group $N\rtimes(\IZ/2\IZ)^2$. 

Now we compute $H^*_{N_1\rtimes(\IZ/2\IZ)^2}(\IR;\calr_{\beta_1})$.

Recall that $$R((\IZ/2\IZ)^2)=\IZ[\gamma_1,\gamma_2]/\langle\gamma_1^2-1,\gamma_2^2-1\rangle.$$

Consider $\IR$ with the action of $N_1\rtimes(\IZ/2\IZ)^2$ defined as follows:
$$(n,\sigma_i)\cdot x=n-x\text{ for $i=1,2.$}$$

A $N_1\rtimes(\IZ/2\IZ)^2$-CW-complex decomposition for $\IR$ is given by

\begin{center}
	\begin{tabular}{ccccccccc}
		\hline
		&0-cells  & & & &&1-cell\\
		\hline\\
		$a_1$ & $P$ & $(0,\sigma_1),(0,\sigma_2)$ & $(\IZ/2\IZ)^2$& &$b_1$ & $PQ$ & $(0,\sigma_1\sigma_2)$ & $\IZ/2\IZ$\\
		
		$a_2$ & $Q$ & $ (-1,\sigma_1),(-1,\sigma_2)$& \\
	
		\hline
	
	\end{tabular}
	
\end{center}

The Bredon cochain complex of $\IR$ with coefficients in projective $\beta_1$-representations is isomorphic to (here $\beta_1\mid$ denotes the restriction of $\beta_1$ to the corresponding subgroup)

$$0\to R_{\beta_1\mid}((\IZ/2\IZ)^2)\oplus R_{\beta_1\mid}((\IZ/2\IZ)^2)\xrightarrow{\delta}R_{\beta_1\mid}(\IZ/2\IZ)\to0.$$
The above complex can be viewed as a subcomplex of
$$R(D_8)\oplus R(D_8)\xrightarrow{\bar{\delta}}R(\IZ/4\IZ)\to0.$$
Making an easy calculation on the above complex we obtain
$$
H^p_{N_1\rtimes(\IZ/2\IZ)^2}(\IR;\calr_{\beta_1})=\begin{cases}
\IZ &\text{if $p=0,1$}\\
0&\text{if $p>1$.} 
\end{cases} $$
In order to obtain $$H^*_{N\rtimes(\IZ/2\IZ)^2}(\IC^3;\calr_\beta)$$ we need to calculate $$H^*_{N_j\rtimes(\IZ/2\IZ)^2}(\IR;\calr)$$ for $j=1,2,3$.

First notice that the groups $N_j\rtimes(\IZ/2\IZ)^2$ are isomorphic for $j=1,2,3$ and then it suffices to calculate for $j=1$.

In this case the Bredon cochain complex takes the form 

$$0\to R((\IZ/2\IZ)^2)\oplus R((\IZ/2\IZ)^2)\xrightarrow{\delta}R(\IZ/2\IZ)\to0.$$

We have an isomorphism of projective $R((\IZ/2\IZ)^2)$-modules

$$H^p_{N_1\rtimes(\IZ/2\IZ)^2}(\IR;\calr)\cong\begin{cases}R((\IZ/2\IZ)^2)\oplus I&\text{if }p=0\\
0& \text{if }p>0.\end{cases}$$
where $I=\langle(\gamma_1-\gamma_2,0)\rangle$. Since all above modules are projective, in the corresponding Kunneth formula the term with $\Tor$ is zero and then using Theorem \ref{kunnethformula} we have a complete calculation of the Bredon cohomology groups of $\IC^3$ with coefficients in $\beta$-representations.
\begin{theorem}\label{twistedy2}
	There is an isomorphism of $\IZ$-graded, $R((\IZ/2\IZ)^2)$-modules
	\begin{align*}&H^n_{N\rtimes(\IZ/2\IZ)^2}(\IC^3;\calr_\beta)\cong\\&
	\bigoplus_{\sum_{i=1}^{6}p_i=n}H^{p_1}_{N_1\rtimes(\IZ/2\IZ)^2}(\IR;\calr_{\beta_1})\otimes\left(\bigotimes_{i=2}^6H^{p_i}_{N_1\rtimes(\IZ/2\IZ)^2}(\IR;\calr)\right).\end{align*}
\end{theorem}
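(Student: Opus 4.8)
The plan is to derive the formula by iterating Theorem \ref{kunnethformula} along the decomposition of $N\rtimes(\IZ/2\IZ)^2$ as a multiple pullback over $(\IZ/2\IZ)^2$ of six groups --- two copies of each $N_j\rtimes(\IZ/2\IZ)^2$, $j=1,2,3$ --- together with the matching product decomposition $\IC^3\cong\IR\times\IR\times\IR\times\IR\times\IR\times\IR$, in which the $i$-th factor $\IR$ carries the action of the $i$-th group in the multiple pullback. Order the factors so that the first one is one of the two $N_1\rtimes(\IZ/2\IZ)^2$-copies. Since $\beta$ is by construction the pullback of $\beta_1\in Z^2(N_1\rtimes(\IZ/2\IZ)^2;\IZ/2\IZ)$ along the projection $N\rtimes(\IZ/2\IZ)^2\to N_1\rtimes(\IZ/2\IZ)^2$ onto that first factor, under the decomposition $\beta$ corresponds to $\beta_1$ on the first factor and to the trivial cocycle on the remaining five.

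First I would record a multiple-pullback version of Theorem \ref{kunnethformula}, obtained by induction on the number of factors: write $N\rtimes(\IZ/2\IZ)^2=(N_1\rtimes(\IZ/2\IZ)^2)\times_{(\IZ/2\IZ)^2}\Gamma'$, where $\Gamma'$ is the multiple pullback of the remaining five groups (which still surjects onto $(\IZ/2\IZ)^2$), and $\IR^6=\IR\times\IR^5$ accordingly. Applying Theorem \ref{kunnethformula} with $G=N_1\rtimes(\IZ/2\IZ)^2$, $H=\Gamma'$, $K=(\IZ/2\IZ)^2$, the cocycle $\beta_1$ on $X=\IR$, and $Y=\IR^5$ gives a split short exact sequence expressing $H^n_{N\rtimes(\IZ/2\IZ)^2}(\IC^3;\calr_\beta)$ in terms of $H^*_{N_1\rtimes(\IZ/2\IZ)^2}(\IR;\calr_{\beta_1})\otimes_{R((\IZ/2\IZ)^2)}H^*_{\Gamma'}(\IR^5;\calr)$ and a $\Tor^{R((\IZ/2\IZ)^2)}$ term. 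Iterating on $\Gamma'$ (now with all coefficients untwisted) peels off the remaining five factors one at a time, each contributing a copy of $H^*_{N_1\rtimes(\IZ/2\IZ)^2}(\IR;\calr)$ and producing the iterated tensor product $\bigotimes_{i=2}^6 H^*_{N_1\rtimes(\IZ/2\IZ)^2}(\IR;\calr)$; here I would use that the six groups $N_j\rtimes(\IZ/2\IZ)^2$ are isomorphic, so all untwisted factors share the same Bredon cohomology.

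The reason the $\Tor$ terms disappear at every stage is that the cohomology $H^*_{N_1\rtimes(\IZ/2\IZ)^2}(\IR;\calr)\cong R((\IZ/2\IZ)^2)\oplus I$ split off at each step is concentrated in degree $0$ and is a projective, hence flat, $R((\IZ/2\IZ)^2)$-module (this projectivity was established just before the statement). In any grouping of the pullback, one of the two arguments of each occurring $\Tor^{R((\IZ/2\IZ)^2)}$ is therefore a tensor product of such projective modules, hence itself projective and flat, so every $\Tor$ term vanishes and each split short exact sequence collapses to an isomorphism. In particular no flatness of the twisted factor $H^*_{N_1\rtimes(\IZ/2\IZ)^2}(\IR;\calr_{\beta_1})$ (which is $\IZ$ in degrees $0$ and $1$) is needed, precisely because it always appears paired with a flat module. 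Assembling the isomorphisms produced at each stage yields the stated formula.

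The main obstacle I expect is the bookkeeping in the multiple-pullback induction: checking that the compatible families of subgroups are preserved under regrouping (that $\calfin_{G}\times_{\calfin_K}\calfin_{H}$ for the two-factor pullback agrees with the restriction to $N\rtimes(\IZ/2\IZ)^2$ of the family coming from all six factors), that the coefficient module $\calr_\beta$ genuinely decomposes as the tensor product, in the sense of Theorem \ref{eilenberg-zilber-coh}, of $\calr_{\beta_1}$ with five copies of $\calr$ over the appropriate copies of $R((\IZ/2\IZ)^2)$ --- which is the orbit-wise application of the twisted analogue of Theorem \ref{pullbackrepringiso} --- and that at each stage the relevant $R((\IZ/2\IZ)^2)$-module structures are the intended ones. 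Once this set-up is in place, the remainder of the argument is purely formal.
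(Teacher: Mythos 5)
Your proposal is correct and follows essentially the same route the paper takes: decompose $N\rtimes(\IZ/2\IZ)^2$ as the multiple pullback over $(\IZ/2\IZ)^2$ of the six factors $N_j\rtimes(\IZ/2\IZ)^2$, iterate Theorem \ref{kunnethformula}, and use projectivity (hence flatness) of the untwisted cohomology $H^*_{N_1\rtimes(\IZ/2\IZ)^2}(\IR;\calr)\cong R((\IZ/2\IZ)^2)\oplus I$ over $R((\IZ/2\IZ)^2)$ to kill the $\Tor$ terms. Your explicit observation that the twisted factor $H^*_{N_1\rtimes(\IZ/2\IZ)^2}(\IR;\calr_{\beta_1})\cong\IZ$ need not itself be flat over $R((\IZ/2\IZ)^2)$ --- only its tensor partner must be --- is a worthwhile sharpening of the paper's terse ``since all above modules are projective'' remark, but it is not a different proof.
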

Finally, because as all above modules are free over $\IZ$ by Proposition 5.8 in \cite{luck-cherncharacters} we have the following result

\begin{corollary}
There is an isomorphism of $\IZ$-graded, $R((\IZ/2\IZ)^2)$-modules
	\begin{align*}{ }^\alpha K_{orb}(T^6/(\IZ/2\IZ)^2)&= { }^\beta K_{N\rtimes(\IZ/2\IZ)^2}^*(\IC^3)\\&\cong H^*_{N\rtimes(\IZ/2\IZ)^2}(\IC^3;\calr_{\beta}).\end{align*}
\end{corollary}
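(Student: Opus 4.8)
The plan is to feed the computation of Theorem~\ref{twistedy2} into the Atiyah--Hirzebruch-type spectral sequence of Theorem~\ref{a-h-twisted}. First I would check that, writing $\Gamma=N\rtimes(\IZ/2\IZ)^2$, the space $\IC^3$ is a finite proper $\Gamma$-CW-complex: it carries the product of six copies of the one-dimensional $N_1\rtimes(\IZ/2\IZ)^2$-CW-complex $\IR$ described above, so it has finitely many cells and all its isotropy groups are finite, being fibered products of copies of $(\IZ/2\IZ)^2$ and $\IZ/2\IZ$. Hence Theorem~\ref{a-h-twisted} applies with $G=\Gamma$, $X=\IC^3$ and the normalized torsion cocycle $\beta$, producing a spectral sequence with $E_2^{p,q}=H^p_\Gamma(\IC^3;\calr_\beta)$ for $q$ even, $E_2^{p,q}=0$ for $q$ odd, and $E_\infty^{p,q}\Rightarrow { }^\beta K^{p+q}_\Gamma(\IC^3)$.

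Next I would read off from Theorem~\ref{twistedy2}, together with the computations of $H^*_{N_1\rtimes(\IZ/2\IZ)^2}(\IR;\calr_{\beta_1})$ and $H^*_{N_1\rtimes(\IZ/2\IZ)^2}(\IR;\calr)$, that $H^p_\Gamma(\IC^3;\calr_\beta)$ vanishes unless $p\in\{0,1\}$ and that $H^0_\Gamma(\IC^3;\calr_\beta)$ and $H^1_\Gamma(\IC^3;\calr_\beta)$ are finitely generated free abelian groups, being tensor products over $\IZ$ of free abelian groups. Thus the $E_2$-page is concentrated in the two columns $p=0,1$ and vanishes in all odd rows, so every differential $d_r$ with $r\geq 2$ either lands in an odd row or shifts the column degree by at least $2$; in either case its target is zero, and the spectral sequence collapses at $E_2$.

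Then I would untangle the filtration. In a fixed total degree $n$ the only term $E_\infty^{p,n-p}$ that can be nonzero is the one with $p\in\{0,1\}$ and $n-p$ even; hence ${ }^\beta K^0_\Gamma(\IC^3)$ carries a filtration with single nonzero subquotient $H^0_\Gamma(\IC^3;\calr_\beta)$ and ${ }^\beta K^1_\Gamma(\IC^3)$ one with single nonzero subquotient $H^1_\Gamma(\IC^3;\calr_\beta)$. Since the filtration is by $R((\IZ/2\IZ)^2)$-submodules and there is no extension to resolve, this yields ${ }^\beta K^*_\Gamma(\IC^3)\cong H^*_\Gamma(\IC^3;\calr_\beta)$ as $\IZ$-graded $R((\IZ/2\IZ)^2)$-modules; the freeness over $\IZ$ is exactly the hypothesis under which this conclusion is packaged as Proposition~5.8 in \cite{luck-cherncharacters}. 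Finally, for the leftmost equality I would note that $\beta$ is, by its construction from $\beta_1$ and $\alpha$, the pullback of $\alpha$ along the quotient $q\colon N\rtimes(\IZ/2\IZ)^2\to(\IZ/2\IZ)^2$, and that $N=\ker q$ acts freely by translations on $\IC^3=\IR^6$ with quotient $T^6$; a standard change-of-groups identification then gives ${ }^\beta K^*_\Gamma(\IC^3)\cong { }^\alpha K^*_{(\IZ/2\IZ)^2}(T^6)={ }^\alpha K_{orb}(T^6/(\IZ/2\IZ)^2)$.

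The step I expect to require the most care is not the collapse (which is forced by the two-column shape) but the bookkeeping of module structures: confirming that the pullback of $\alpha$ to $N\rtimes(\IZ/2\IZ)^2$ coincides with the cocycle $\beta$ used in Theorem~\ref{twistedy2}, and that the identification coming from Theorem~\ref{a-h-twisted} is multiplicative over $R((\IZ/2\IZ)^2)$. This is precisely the point that Proposition~5.8 in \cite{luck-cherncharacters}, combined with the naturality of the spectral sequence and the freeness over $\IZ$, is designed to handle.
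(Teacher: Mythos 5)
Your proposal is correct and follows essentially the same route as the paper: run the twisted Atiyah--Hirzebruch spectral sequence of Theorem~\ref{a-h-twisted} and feed in the computation of Theorem~\ref{twistedy2}. You have usefully unpacked what the paper compresses into a citation of Proposition~5.8 of \cite{luck-cherncharacters} together with the freeness over $\IZ$---the collapse is forced here by concentration of $H^*_\Gamma(\IC^3;\calr_\beta)$ in the two adjacent columns $p=0,1$ (a different mechanism from the even-degree concentration behind Corollary~\ref{y1}), and the leftmost equality is the $N$-free change-of-groups identification, using that $\beta$ is pulled back from $\alpha$ along $N\rtimes(\IZ/2\IZ)^2\to(\IZ/2\IZ)^2$.
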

\begin{remark}
	Although we use Theorem \ref{kunnethformula} to compute twisted K-theory for a very particular twisting, one could follow the same strategy in order to compute $${ }^\alpha K_{orb}^*(T^6/(\IZ/2\IZ)^2)$$for every $\alpha$ coming from 2-cocycles of $G$ or $H$. 
\end{remark}
\bibliographystyle{alpha}
\bibliography{pullback}
\end{document}